\theoremstyle{plain}
    \newtheorem{thm}{Theorem}[section]
    \newtheorem{lem}[thm]   {Lemma}
    \newtheorem{prop}[thm]  {Proposition}
\theoremstyle{definition}
    \newtheorem{defn}[thm]  {Definition}
    \newtheorem{ex}[thm]{Example}
    \newtheorem{rem}[thm]{Remark}
\def\max{\mathrm{max}}
\def\dim{\mathrm{dim}}
\def\spspan{\mathrm{s\, pspan}}
\def\pspan{\mathrm{pspan}}
\def\span{\mathrm{span}}
\def\sspan{\mathrm{s\, span}}
\newcommand{\be}{\begin{enumerate}}
\newcommand{\ee}{\end{enumerate}}
\newcommand{\R}{\mathbb{R}}
\newcommand{\Z}{\mathbb{Z}}
\newcommand{\C}{\mathbb{C}}
\newcommand{\la}{\langle}
\newcommand{\ra}{\rangle}
\tikzset{curve/.style={settings={#1},to path={(\tikztostart)
    .. controls ($(\tikztostart)!\pv{pos}!(\tikztotarget)!\pv{height}!270:(\tikztotarget)$)
    and ($(\tikztostart)!1-\pv{pos}!(\tikztotarget)!\pv{height}!270:(\tikztotarget)$)
    .. (\tikztotarget)\tikztonodes}},
    settings/.code={\tikzset{quiver/.cd,#1}
        \def\pv##1{\pgfkeysvalueof{/tikz/quiver/##1}}},
    quiver/.cd,pos/.initial=0.35,height/.initial=0}
\tikzset{tail reversed/.code={\pgfsetarrowsstart{tikzcd to}}}
\tikzset{2tail/.code={\pgfsetarrowsstart{Implies[reversed]}}}
\tikzset{2tail reversed/.code={\pgfsetarrowsstart{Implies}}}
\tikzset{no body/.style={/tikz/dash pattern=on 0 off 1mm}}
\begin{document}

\title[Projective span]{Projective span of Wall manifolds}

\author{Mark Grant}

\author{Baylee Schutte}

\address{Institute of Mathematics,
Fraser Noble Building,
University of Aberdeen,
Aberdeen AB24 3UE,
UK}

\email{mark.grant@abdn.ac.uk}
\email{b.schutte.21@abdn.ac.uk}

\date{\today}

\keywords{span, projective span, vector fields, line fields}
\subjclass[2010]{57R25 (Primary); 55S40, 57R19, 15A66 (Secondary).}

\begin{abstract} The projective span of a smooth manifold is defined to be the maximal number of linearly independent tangent line fields. We initiate a study of projective span, highlighting its relationship with the span, a more classical invariant. We calculate the projective span for all Wall manifolds, which are certain mapping tori of Dold manifolds.
\end{abstract}


\maketitle
\section{Introduction}\label{sec:intro}

Let $M$ be a closed smooth manifold of dimension $m$. A \emph{line field} on $M$ is a smooth section $\ell:M\to PTM$ of the projectivized tangent bundle of $M$. A finite set $\{\ell_1,\ldots , \ell_k\}$ of line fields on $M$ is linearly independent if for each $x\in M$ the lines $\ell_1(x),\ldots, \ell_k(x)$ generate a $k$--dimensional subspace of $TM_x$. 

\begin{defn}
The \emph{projective span} of $M$, denoted $\pspan(M)$, is defined to be the maximal $k$ such that $M$ admits $k$ linearly independent line fields.
\end{defn}

Calculation of the projective span of a given manifold or family of manifolds will be called the \emph{line field problem}. Recall that the \emph{span} of $M$, denoted $\span(M)$, is the maximal $k$ such that $M$ admits $k$ linearly independent vector fields; its calculation is known as the \emph{vector field problem}. Since linearly independent vector fields span linearly independent line fields, $\pspan(M)\geq \span(M)$ always. The following basic example (noted already in \cite{MS}) shows that inequality can be strict.

\begin{ex} The Klein bottle $K$ has $\pspan(K)=2>1=\span(K)$.

The standard construction of $K$ as a quotient of the plane indicates $2$ linearly independent line fields, giving $\pspan(K)=2$. One of these line fields lifts to a vector field, giving $\span(K)\ge1$. But $K$ is not parallelizable since it is non-orientable, so $\span(K)<2$. \qed
\end{ex}

Let $\chi(M)$ denote the Euler characteristic of $M$. It is a classical result due to H.\ Hopf that $\span(M)>0$ if and only if $\chi(M)=0$. It is also well-known that $\pspan(M)>0$ if and only if $\chi(M)=0$ \cite{Sam,Markus,CG}. 

At the other extreme, an $m$--dimensional manifold $M$ is called \emph{parallelizable} if $\span(M)=m$, and is called \emph{line element parallelizable} if $\pspan(M)=m$. Massey and Szczarba \cite{MS} give an example of an orientable $4$--manifold $M$ which is line element parallelizable but not parallelizable. Their example has $\pspan(M)=4>2=\span(M)$. Auslander and Szczarba \cite{AS} showed that all compact solvmanifolds are line element parallelizable. Further results on line element parellelizability were obtained by Alliston \cite{Alliston} and by W.\ Iberkleid \cite{Iberkleid}, who showed that $M$ is unoriented cobordant to a line element parellelizable manifold if and only if $\chi(M)$ is even.

In this paper, we solve the line field problem for Wall manifolds. The Wall manifolds $Q(m,n)$ are certain mapping tori of Dold manifolds $P(m,n)$, used by Wall in his determination of the oriented cobordism ring \cite{Wall}. The vector field problem has been studied by Novotn\'y \cite{Novotny} for Dold manifolds, and by Khare \cite{Khare} for Wall manifolds; however in most cases the exact value of $\span(P(m,n))$ or $\span(Q(m,n))$ remains unknown.

\begin{thm}\label{thm:main} 
Given \(m \geq 1\) and \(n \geq 0\), the projective span of the Wall manifolds satisfies
\[
\pspan(Q(m,n)) = 2\nu(n+1) + m + 1,
\]
where \(\nu(k)\) denotes the largest exponent such that the corresponding \(2\)--power divides the integer \(k\).
\end{thm}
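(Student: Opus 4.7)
Write $d = m+2n+1 = \dim Q(m,n)$ and $k_0 = 2\nu(n+1) + m + 1$. I would prove the two inequalities $\pspan(Q(m,n)) \geq k_0$ and $\pspan(Q(m,n)) \leq k_0$ separately, both based on a description of $TQ(m,n)$.

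The starting point is a stable description of the tangent bundle. Since $Q(m,n)$ is a mapping torus over $S^1$ with fiber the Dold manifold $P(m,n) = (S^m \times \C P^n)/(\Z/2)$, the projection gives $TQ(m,n) \cong \tau \oplus \epsilon^1$ with $\tau$ the vertical tangent bundle. The quotient construction of $P(m,n)$ then yields a stable isomorphism of the form
\[
TQ(m,n) \oplus \epsilon^a \;\cong\; (m+1)\alpha \;\oplus\; (n+1)(\alpha\otimes_\R \beta_\R) \;\oplus\; \epsilon^b,
\]
for small constants $a,b$, where $\alpha$ is the real line bundle on $P(m,n)$ associated to the double cover (pulled back to $Q(m,n)$) and $\beta$ is the tautological complex line bundle on $\C P^n$ (similarly pulled back).

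For the lower bound, the trivial $\epsilon^1$ from the projection to $S^1$ together with suitable sections of $(m+1)\alpha$ provides $m+1$ independent line field summands of $TQ(m,n)$. Concretely, these sections descend from the $m+1$ $\Z/2$-equivariant coordinate sections of the trivial bundle $(m+1)\epsilon$ on $S^m \subset \R^{m+1}$, which descend to $(m+1)\alpha$ below. The further $2\nu(n+1)$ line fields come from the complex part: writing $n+1 = 2^{\nu(n+1)}q$ with $q$ odd, classical splittings ensure that $2^j$-fold sums of the rank-$2$ real bundle $\alpha \otimes_\R \beta_\R$ contain $2j$ line bundle summands for $1 \leq j \leq \nu(n+1)$, and multiplying by the odd factor $q$ preserves this. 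Combined, this gives the required $k_0$ independent line fields on $Q(m,n)$.

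For the upper bound, I would argue by Stiefel--Whitney class obstruction. A hypothetical splitting $TQ(m,n) \cong \xi \oplus L_1 \oplus \cdots \oplus L_{k_0+1}$ with the $L_i$ real line bundles would force the factorization $w(TQ(m,n)) = w(\xi)\prod_{i=1}^{k_0+1}(1+a_i)$ with $a_i = w_1(L_i) \in H^1(Q(m,n);\Z/2)$ and $w_j(\xi) = 0$ for $j > 2n - 2\nu(n+1)$. From Dold's presentation $H^*(P(m,n);\Z/2) = \Z/2[c,e]/(c^{m+1}, e^{n+1})$ (with $|c|=1$, $|e|=2$) and the Wang sequence for the fibration $Q(m,n) \to S^1$, one obtains an explicit presentation of $H^*(Q(m,n);\Z/2)$ with only a small set of $1$-dimensional generators. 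This reduces the possible factorizations $\prod(1+a_i)$ to finitely many cases; in each, I would exhibit a nonzero Stiefel--Whitney monomial of $TQ(m,n)$ in some degree $> 2n - 2\nu(n+1)$ that cannot arise from the factorization. The combinatorial input is Kummer's theorem, which controls the parities of $\binom{n+1}{k}$ in terms of $\nu(n+1)$.

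The principal obstacle is the upper bound. The lower bound rests on bundle-theoretic manipulations close to those in Khare's and Novotn\'y's analyses of the ordinary span. The upper bound, on the other hand, requires a case analysis over all candidate $1$-dimensional factorizations in a cohomology ring with several $H^1$-generators, together with the precise identification of a nonzero Stiefel--Whitney monomial of $TQ(m,n)$ in the critical degree that ties the obstruction to $\nu(n+1)$ via Kummer's theorem; pinning down exactly the right monomial and verifying that no candidate factorization can absorb it is where I anticipate the argument being the most delicate.
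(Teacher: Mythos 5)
Your two-sided architecture (lower bound by exhibiting line bundle summands, upper bound by characteristic classes) is natural, but the upper bound as proposed cannot work, and this is the decisive gap. Mod $2$ cohomology is too coarse to detect $\nu(n+1)$ sharply. Take $m=1$ and $n=7$, so $\dim Q(1,7)=16$ and the claimed answer is $\pspan(Q(1,7))=8$. Wall's formula gives $w(Q(1,7))=(1+\overline{c}+\overline{x})(1+\overline{c}+\overline{d})^{8}=(1+\overline{c}+\overline{x})(1+\overline{c}^{8}+\overline{d}^{8})=1+\overline{c}+\overline{x}$, since $\overline{d}^{8}=0$ and $\overline{c}^{3}=0$ when $m=1$. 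Hence the total Stiefel--Whitney class admits the factorization $(1+a_{1})\cdots(1+a_{16})$ with $a_{1}=\overline{c}+\overline{x}$ and all other $a_{i}=0$, so there is no Stiefel--Whitney obstruction even to line element parallelizability, let alone to $\pspan\geq 9$. The same collapse occurs whenever $n+1$ is a power of $2$ (all $\binom{n+1}{j}$ with $0<j<n+1$ are even), which is exactly the regime where $2\nu(n+1)$ is largest and the upper bound is hardest; Kummer's theorem only tells you that the relevant binomial coefficients vanish mod $2$, i.e.\ that the obstruction you are looking for is not there. This is why the paper explicitly remarks that ordinary cohomology is unlikely to give sharp upper bounds for $n$ odd, and instead derives the upper bound from the fibration $\C P^{n}\to Q(m,n)\to Q(m,0)$ via $\pspan(Q(m,n))\leq\spspan(\C P^{n})+\dim Q(m,0)$, together with $\spspan(\C P^{n})=\sspan(\C P^{n})=2\nu(n+1)$ (simple connectivity of $\C P^{n}$ plus Steer's $K$--theoretic computation). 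That $K$--theoretic input cannot be replaced by Stiefel--Whitney combinatorics.

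The lower bound is closer in spirit to what is needed, but it also has unaddressed gaps. First, the ``classical splittings'' you invoke for $2^{j}$-fold sums of $\alpha\otimes_{\R}\beta_{\R}$ are precisely Steer's Clifford-module construction; to descend to $Q(m,n)$ you must additionally arrange these splittings to be compatible, up to sign, with the involution induced by complex conjugation on $\C P^{n}$ (and with the reflection on $S^{m}$ and the shift on $S^{1}$). This equivariance is the substantive content of the paper's Lemma \ref{lem:Aj} --- anticommuting, skew-Hermitian operators $A_{j}$ on $\C^{n+1}$ satisfying $A_{j}(\overline{\mathbf{z}})=\pm\overline{A_{j}(\mathbf{z})}$ --- and it is not automatic from a bundle isomorphism over $\C P^{n}$ alone. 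Second, your splitting is only stable (note the $\epsilon^{a}$ on the left), and destabilizing a splitting of a rank-$d$ bundle over a $d$-manifold is obstructed in general; the paper avoids this by constructing genuine vector fields on $\C P^{n}\times S^{m}\times S^{1}$ whose ``stable'' components are absorbed into the trivialization $TS^{m}\times TS^{1}\cong\epsilon^{m+1}$. Both of these points are repairable, but they are where the real work of the lower bound lies.
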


\begin{rem}
Using the span calculations of Khare \cite{Khare}, we see that 
\[
1 = \span(Q(m,n)) < \pspan(Q(m,n)) = m+1
\]
for \(m \text{ and } n\) both even, showing that the difference between span and projective span can be arbitrarily large. 
\end{rem}

The proof of Theorem \ref{thm:main} has two main ingredients. The first is the calculation of the stable span of $\C P^n$ due to Steer \cite{Steer}, who used complex Clifford modules to produce vector fields. The second is the observation that if $M$ is a free $G$-manifold with $G$ a compact Lie group, then vector fields on $M$ that are \emph{$G$-quasi-invariant}, namely, $G$-invariant up to a sign, induce line fields on the quotient $M/G$.
 
Our motivation for studying projective span comes from two sources. Firstly, the vector field problem has a rich history and has inspired a great deal of deep mathematics. We mention here Adams' solution to the vector field problem on spheres \cite{Adams}; the index theoretical approach of Atiyah  and Dupont \cite{AD}; the approach using obstruction theory and higher cohomology operations pioneered by Emery Thomas and others, as surveyed for example in \cite{Thomas}; and Koschorke's approach \cite{Koschorke} using singularity theory and normal bordism. It is natural to hope that the study of projective span will similarly lead to new theoretical advances.

Our second motivation comes from soft-matter physics. There line fields have been used to model nematic liquid crystals, that is, ordered media consisting of rod-shaped molecules with no preferred axial direction which tend to align with each other in regions of space. Similarly, tuples of linearly independent line fields can be used to model ``biaxial nematics'', which is a catch-all term used in physics for ordered media with more than one axis of alignment. Synthesis of the  biaxial nematic phase is something of a holy-grail in experimental soft-matter physics and chemistry, and is expected to have important applications to display technology.  A proper mathematical understanding of the toplogical obstructions to the existence of independent line fields may lead to a deeper understanding of the singularities or defects in such ordered media, and therefore may have practical applications.      

We thank Cihan Bahran and Ran Levi for valuable input regarding this work. We especially thank Markus Upmeier for guiding us through the literature on Clifford algebras and for helpful comments on an earlier draft.

\section{Introduction to the Projective Span}\label{sec:props}

\subsection{Basic properties of projective span}\label{subsec:props}
In this section we collect some basic properties of the projective span, highlighting its relationship to span. We also outline the known tools for computing projective span and give several interesting examples. 

We enumerate a few obvious properties of the projective span. 
\begin{prop}\label{prop:props}
If \(M\) is a smooth closed connected \(m\)--manifold then
\begin{enumerate}[label=\emph{(\roman*)}]
    \item \(\pspan(M) \geq k\) if and only if \(TM \cong L_1 \oplus \cdots \oplus L_k \oplus \xi^{m-k}\), where the \(L_i\), for \(1 \leq i \leq k\), are possibly nontrivial line bundles over \(M\), and \(\xi^{m-k}\) is the rank \((m-k)\) complementary bundle. 
    \item \(\pspan(M) \neq m-1.\)
    \item \(\pspan(M) \geq \span(M).\)
    \item If \(M\) is simply connected then \(\pspan(M) = \span(M).\)
    \item If \(\widetilde{M}\to M\) is a covering space, then \(\pspan(\widetilde{M}) \geq \pspan(M).\)
\end{enumerate}
\end{prop}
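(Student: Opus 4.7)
The proof would hinge on the correspondence between line fields and line subbundles of $TM$. Specifically, a section $\ell:M\to PTM$ of the projectivized tangent bundle is the same as a line subbundle $L\subset TM$ (the fibre of $L$ over $x$ being $\ell(x)\subset TM_x$), and $k$ line fields being linearly independent means precisely that the sum $L_1+\cdots+L_k$ is direct and has rank $k$. Once we have a rank-$k$ subbundle of $TM$, a choice of Riemannian metric on $M$ produces an orthogonal complement $\xi^{m-k}$, giving the splitting in (i); conversely such a splitting clearly yields $k$ linearly independent line fields. Item (iii) is then immediate: $k$ independent vector fields trivialize a rank-$k$ subbundle which decomposes as a sum of $k$ trivial line bundles, and trivial line bundles are still line bundles.

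For (ii), I would use (i) directly: if $\pspan(M)\geq m-1$ then $TM\cong L_1\oplus\cdots\oplus L_{m-1}\oplus \xi^1$, and the complementary bundle $\xi^1$ is itself a line bundle, so $TM$ is a sum of $m$ line bundles and hence $\pspan(M)=m$. This is the small observation that forces the gap between $m-1$ and $m$, completely analogous to the classical fact that $\span(M)\neq m-1$ for orientable $M$ (and is indeed the main conceptual content of the proposition). For (iv), I would invoke the fact that simply connected spaces have $H^1(M;\Z/2)=0$, so every real line bundle over $M$ is trivial; then the $L_i$ in (i) are all trivial, and the splitting already exhibits $k$ independent vector fields, giving $\span(M)\geq \pspan(M)$, and the reverse inequality is (iii).

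Finally, for (v), if $p\colon \widetilde M\to M$ is a covering then $T\widetilde M\cong p^*TM$, and pulling back the decomposition from (i) through $p$ gives a splitting $T\widetilde M\cong p^*L_1\oplus\cdots\oplus p^*L_k\oplus p^*\xi^{m-k}$. Since pullbacks of line bundles are line bundles, applying (i) in the other direction yields $\pspan(\widetilde M)\geq k=\pspan(M)$.

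The only nontrivial step is (ii); everything else is essentially a bookkeeping exercise once the bundle-theoretic reformulation in (i) is in place. I expect the main subtlety to be making sure the complementary bundle $\xi^{m-k}$ in (i) is presented cleanly (via a Riemannian metric), since the rest of the proposition is then a short sequence of applications of this reformulation.
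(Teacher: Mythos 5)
Your proposal is correct and matches the paper's own argument in every part: (i) via the identification of line fields with line subbundles (with the metric-complement detail made explicit, which the paper leaves implicit), (ii) by the observation that the rank-one complement is itself a line bundle, (iii) by spanning lines with vector fields, (iv) via $H^1(M;\Z/2)=0$ forcing the $L_i$ to be trivial, and (v) by pulling back the splitting along the covering. No gaps; this is essentially the same proof.
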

\begin{proof}[Proof of \emph{(i)}]
This follows since a line field \(\ell \colon M \to PTM\) is a one dimensional subbundle \(\ell(M) \subseteq TM\) of the tangent bundle of \(M\). \phantom{\qedhere}
\end{proof}

\begin{proof}[Proof of \emph{(ii)}] Assume for a contradiction that \(\pspan(M) = m-1\). Then by (i), \(TM \cong L_1 \oplus \cdots \oplus L_{m-1} \oplus \xi^1,\) but \(\xi^1 \) is one dimensional, whence \(\pspan(M) = m.\)\phantom{\qedhere}
\end{proof}

\begin{proof}[Proof of \emph{(iii)}] A non-vanishing vector field \(v \colon M \to TM\) gives rise to a line field \(\ell \colon M \to PTM\) by taking 
\[
\ell(x) \coloneqq \langle v(x) \rangle \subseteq TM_x
\]
to be the line spanned by \(v(x)\) at each point.\phantom{\qedhere}
\end{proof}

\begin{proof}[Proof of \emph{(iv)}] Suppose \(\pspan(M) \geq k\). Thus \(TM \cong L_1 \oplus \cdots \oplus L_k \oplus \xi^{m-k}\). The assumption that \(M\) is simply connected forces each \(L_i\), for \(i = 1, \dots, k\), to be trivial, since line bundles on $M$ are classified by the first cohomology group $H^1(M;\Z_2) \cong {\rm{Hom}}(\pi_1(M), \Z_2) = 0$. Hence \(TM \cong \varepsilon^k \oplus \xi^{m-k}\) and \(\span(M) \geq \pspan(M).\) Finally, equality holds by (iii). \phantom{\qedhere}
\end{proof}

\begin{proof}[Proof of \emph{(v)}] Suppose \(\pspan(M) \geq k\). Then \(TM = L_1 \oplus \cdots \oplus L_k \oplus \xi^{m-k}\). Then 
\[
T\widetilde{M} = p^*(TM) = p^*L_1 \oplus \cdots \oplus p^*L_k \oplus p^*\xi^{m-k},
\]
but each \(p^*L_i\), for \(1 \leq i \leq k\), is a line bundle and the result follows. 
\end{proof}

\begin{rem}
In the proof of Proposition \ref{prop:props}(iii), we see that every non-vanishing vector field gives rise to a line field; of course, not every line field arises from a vector field in this way. Indeed, a line field \(\ell\) lifts to a vector field if and only if its associated double cover \(p \colon S(\ell) \to M\) is trivial, where \(S(\ell) \subseteq S(TM)\) denotes the associated sphere bundle. 
\end{rem}

\subsection{Examples}\label{subsec:ex}

\begin{ex}[Projective span of real projective space]\label{ex:rpn}
Let \(n \geq 1\) be a natural number. Hurwitz and Radon independently computed that \(\span(S^{n-1}) \geq \rho(n) - 1\), where \(\rho(n)\) is the \emph{Hurwitz--Radon number.} Famously, Adams went on to show that \(\span(S^{n-1}) = \rho(n) -1\). Next, one can show that the \(\rho(n)-1\) vector fields on \(S^{n-1}\) descend to the quotient \(S^{n-1}/\Z_2 = \R P^{n-1}\), see \cite{Sankaran} for a modern proof. This implies that \(\span(\R P^{n-1}) = \span(S^{n-1}).\)

We claim that \(\pspan(\R P^{n-1}) = \span(S^{n-1})=\rho(n)-1.\) Indeed, we have that 
\begin{equation}\label{eq:pspanRPn}
\begin{split}
\span(\R P^{n-1})= \span(S^{n-1})&\overset{(a)}=\pspan(S^{n-1}) \\ &\overset{(b)}\geq \pspan(\R P^{n-1}) \overset{(c)}\geq \span(\R P^{n-1}).
\end{split}
\end{equation}
Therefore the inequalites of (\ref{eq:pspanRPn}) are actually equalities, and the claim follows. By way of explanation, (a) follows from Proposition \ref{prop:props}(iv); (b) follows from Proposition \ref{prop:props}(v); and (c) follows from Proposition \ref{prop:props}(iii).
\end{ex}

\begin{ex}[Estimates of the projective span of some flag manifolds]\label{ex:flag}
Suppose \(n_1,\dots, n_s\) are fixed positive integers such that \(n_1 + n_2 + \dots +n_s = n\). A \emph{\((n_1,\dots, n_s)\)--flag over \(\R\)} consists of a collection \(\sigma\) of mutually orthogonal subspaces \((\sigma_1,\dots, \sigma_s)\) in \(\R^n\) such that \({\rm{dim}}_{\R}(\sigma_i) = n_i\). The space of all such flags forms a compact smooth manifold which we denote by \(\R F(n_1,\dots,n_s)\).

Define \(\xi_i\) to be the real vector bundle whose fiber at the point \(\sigma  = (\sigma_1,\dots, \sigma_s)\in \R F(n_1,\dots, n_s)\) is the vector space \(\sigma_i.\)\par 

It is a theorem of K.\ Y.\ Lam \cite{Lam} that the tangent bundle of \(F=\R F(n_1,\dots, n_s)\) splits as in 
\[
TF~~ \cong \bigoplus_{1 \leq i < j \leq s} \overline{\xi}_i\otimes_\R \xi_j,
\]
where \(\overline{\xi}_i\) denotes the dual of \(\xi_i\). This immediately gives a lower bound for the projective span \[\pspan(\R F(\underbrace{1, 1, \dots, 1}_k,n-k)) \geq \binom{k}{2}\] for all $2\le k\le n$. We do not know if this bound is sharp. On the other hand, very little is known about the span of the manifolds \(\R F(1,\dots, 1, n-k)\), see \cite{KZ} for an overview and \cite{AI} for more complete information about the case \(\R F(1,1,n-2).\)
\end{ex}

We now introduce the Dold manifolds and Wall manifolds, and we collect various  facts about them which we will find use for later. 

\begin{ex}[Dold Manifolds]\label{ex:Dold}
Let $m,n\ge0$ be non-negative integers. The Dold manifold $P(m,n)$ is defined to be the quotient of $S^m\times \C P^n$ by the diagonal action of $\Z/2$, where $\Z/2$ acts freely on $S^m$ by the antipodal map and on $\C P^n$ by complex conjugation. Therefore it is a manifold of dimension $(m+2n)$ which fits into a fibre bundle
\[
\xymatrix{
\C P^n \ar[r] & P(m,n) \ar[r] & \R P^m.
}
\]
The manifolds $P(m,n)$ were shown by Dold to provide generators of the odd dimensional cobordism groups. Their mod $2$ cohomology rings are given by
\[
H^*(P(m,n);\Z/2)\cong \Z/2[c,d]/(c^{m+1},d^{n+1})
\]
where $|c|=1$ and $|d|=2$.

In \cite{Korbas}, Korba\v{s} solves the parallelizability problem for Dold manifolds. He finds that among all \(P(m,n)\), only \(P(1,0) \cong \R P^1\), \(P(3,0) \cong \R P^3\), and \(P(7,0) \cong \R P^7\) are parallelizable. 

Suppose \(m \geq 0\) is even. Then the Euler characteristic \(\chi(P(m,1)) = 2\neq 0\), whence \(\span(P(m,1)) = \pspan(P(m,1)) = 0.\)

On the other hand, if \(m \geq 0\) is odd, then Korba\v{s} \cite{Korbas} has calculated that 
\[
\span(P(m,1)) = 1 + \span(S^m),
\]
from which \(P(1,1), P(3,1)\), and \(P(7,1)\) are examples of line element parallelizable manifolds which are not parallelizable. 
\end{ex}

\begin{ex}[Wall manifolds]\label{ex:Wall}
Let \(m,n\geq0\) be non-negative integers. The Wall manifolds $Q(m,n)$ are certain mapping tori of Dold manifolds. The involution on $S^m$ given by negating the last coordinate of $\R^{m+1}$ commutes with the antipodal map and therefore induces an involution $\rho: P(m,n)\to P(m,n)$. The Wall manifold $Q(m,n)$ is then defined to be the mapping torus of this involution. 

Wall \cite{Wall} described the mod $2$ cohomology ring of $Q(m,n)$. It is given by
\[
H^*(Q(m,n);\Z/2)\cong \Z/2[\overline{x},\overline{c},\overline{d}]/(\overline{x}^2,\overline{c}^{m+1}-\overline{c}^m\overline{x},\overline{d}^{n+1})
\]
where $|\overline{x}|=|\overline{c}|=1$ and $|\overline{d}|=2$. Additionally, Wall computed the total Stiefel--Whitney class to be
\[
w\big(Q(m,n)\big) = (1+\overline{c}+\overline{x})(1+\overline{c})^{m-1}(1+\overline{c}+\overline{d})^{n+1}.
\]

\end{ex}

\subsection{Tools for computing projective span.}\label{subsec:tools}
We would like for this paper to lead to further research on projective span; accordingly, we collect in this section the few known theoretical tools for its computation. 

If $\span(M)\ge k$ then the tangent bundle splits as $TM\cong \xi^{m-k}\oplus\varepsilon^k$, where $\xi$ is some bundle of rank $m-k$ and $\varepsilon^k$ denotes a trivial bundle of rank $k$. Therefore the Stiefel--Whitney classes $w_i(M)$ vanish for $i>m-k$. Similarly, if $\pspan(M)\ge k$ then there exist line bundles $L_1,\ldots , L_k$ on $M$ such that 
\[
TM\cong \xi^{m-k}\oplus L_1\oplus \cdots \oplus L_k.
\]
This implies that the virtual Stiefel--Whitney classes $w_i(TM-L_1\oplus \cdots \oplus L_k)$ vanish for $i>m-k$. These classes are readily computed in terms of the Stiefel--Whitney classes of $M$ and the classes $x_i=w_1(L_i)\in H^1(M;\Z/2)$ for $i=1,\ldots , k$ which classify the given line bundles, and give obstructions to the given line bundles splitting off the tangent bundle. 

Mello \cite{Mello} has given sufficient conditions for a splitting $TM\cong \xi\oplus\eta$, where $\eta$ is an arbitrary rank $2$ vector bundle. Applying her results in the case when $\eta\cong L_1\oplus L_2$ is a sum of line bundles, one can identify cases in which $w_{m-1}(TM-L_1\oplus L_2)$ is the sole obstruction to $L_1$ and $L_2$ splitting off $TM$. If this obstruction vanishes and $w_{m-1}(M)$ does not vanish, then $\span(M)<2\leq\pspan(M)$. 

Additionally, Massey and Szczarba \cite[Theorem A]{MS} have developed necessary conditions for projective span to be at least \(k\). Before stating these conditions, we set notation.

We write \(w_i(M) \in H^i(M;\Z_2)\), where \(1 \leq i \leq m\), for the \(i\)--th Stiefel--Whitney class of the tangent bundle \(TM\) of \(M^m\), and \(p_i(M) \in H^{4i}(M;\Z)\), where \(1 \leq i \leq m/4,\) for the \(i\)--th Pontryagin class. Furthermore, if \(M\) is orientable, we write \(e(M) \in H^m(M;\Z)\) for the Euler class of the tangent bundle \(TM\) of \(M\). \par 

We denote by \(\sigma_r(x_1,\dots,x_n)\) the \(r\)--th elementary symmetric polynomial in the variables \(x_1,\dots, x_n\), with \(\sigma_0(x_1,\dots, x_n) = 1\) and \(\sigma_r(x_1,\dots, x_n) = 0\) if \(r < 0\) or \(r >n\). Finally, for any space \(X\), we write \(\beta \colon H^q(X;\Z_2) \to H^{q+1}(X;\Z)\) for the Bockstein homomorphism associated to the exact coefficient sequence \(\Z \to \Z \to \Z_2.\)

\begin{thm}[Massey--Szczarba]\label{thm:mands}
Let \(M\) be a smooth closed connected \(m\)--manifold. If \({\rm{pspan}}(M) \geq k\), then there exist elements
\begin{itemize}
    \item \(x_1,\dots, x_k \in H^1(M;\Z_2)\);
    \item \(u_j \in H^j(M;\Z_2)\), for \(1 \leq j \leq m-k\); and 
    \item \(v_j \in H^{4j}(M;\Z)\), for \(1 \leq j \leq m - k/4\)
\end{itemize} 
satisfying
\[w_q(M) = \sum_{i+j=q}u_j\sigma_i(x_1,\dots,x_k),\] for \(1 \leq q \leq m\), and  
    \begin{multline*}
        p_q(M) = \sum_{i+j=q} \left\{v_i\sigma_{2j}(\beta x_1, \dots, \beta x_k) + \left[(\beta u_{2i})^2 + \beta u_1v_i\right]\sigma_{2j-1}(\beta x_1, \dots, \beta x_k) \right\},
    \end{multline*}
for \(1 \leq q \leq m/4\).

Furthermore, if \(M\) is orientable, then \(e(M) = 0.\)
\end{thm}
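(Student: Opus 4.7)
The plan is to apply Proposition~\ref{prop:props}(i) to the hypothesis \(\pspan(M)\geq k\), producing a splitting \(TM\cong L_1\oplus\cdots\oplus L_k\oplus\xi\) of the tangent bundle, with \(\xi\) of rank \(m-k\). Set \(x_i:=w_1(L_i)\), \(u_j:=w_j(\xi)\), and \(v_i:=p_i(\xi)\); these are precisely the elements that should appear in the conclusion, and the task is to derive the stated relations as algebraic consequences of this splitting.

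The Stiefel--Whitney identities fall out of the Whitney product formula. Writing
\[
w(TM) = w(\xi)\cdot\prod_{i=1}^{k}(1+x_i) = \Bigl(\sum_{j}u_j\Bigr)\Bigl(\sum_{i}\sigma_i(x_1,\ldots,x_k)\Bigr)
\]
and extracting the degree-\(q\) component yields the formula for \(w_q(M)\) immediately.

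For the Pontryagin identities the strategy is to complexify. From the splitting one obtains
\[
TM\otimes_{\R}\C \cong (\xi\otimes\C) \oplus \bigoplus_{i=1}^{k}(L_i\otimes\C),
\]
and multiplicativity of Chern classes gives \(c(TM\otimes\C)=c(\xi\otimes\C)\prod_i(1+c_1(L_i\otimes\C))\). I would then invoke the classical identification \(c_1(L\otimes\C)=\beta(w_1(L))\) for a real line bundle \(L\) (checked by naturality against the tautological bundle over \(\R P^\infty\)) to rewrite this as \(c(\xi\otimes\C)\prod_i(1+\beta x_i)\). Expanding the product, extracting the degree-\(2q\) component, and using \(p_q(M)=(-1)^q c_{2q}(TM\otimes\C)\), I split the resulting sum by parity of the Chern index of \(\xi\otimes\C\). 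The even contributions, with \(c_{2i}(\xi\otimes\C)=(-1)^i v_i\), produce the terms \(v_i\sigma_{2j}(\beta x)\); signs disappear because \(\sigma_{2j}(\beta x)\) is \(2\)-torsion for \(j\geq 1\) and the \(j=0\) term carries trivial sign. The odd contributions leave terms of the form \(c_{2i+1}(\xi\otimes\C)\cdot\sigma_{2j-1}(\beta x)\).

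The main obstacle is therefore to identify the odd-degree Chern classes \(c_{2i+1}(\xi\otimes\C)\) (which are automatically \(2\)-torsion, since complex conjugation on a complexified real bundle sends \(c_r\) to \((-1)^r c_r\), so the sign ambiguities from \((-1)^q\) wash out on these summands) in terms of the data \(u_j\) and \(v_i\). The expected identity is \(c_{2i+1}(\xi\otimes\C)=(\beta u_{2i})^2+\beta u_1\cdot v_i\), and my approach is to derive it via the splitting principle for real bundles: formally write the Chern roots of \(\xi\otimes\C\) as \(\beta a_\alpha\) with \(\sigma_j(a_\alpha)=u_j\), and then expand \(\sigma_{2i+1}(\beta a_\alpha)\) using Newton's identities, exploiting \(2\beta a_\alpha=0\) to collapse most cross terms. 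Small cases (\(\xi=L\) a single line bundle, or \(\xi=L\oplus L\)) provide useful sanity checks and recover the \(i=0\) statement \(c_1(\xi\otimes\C)=\beta u_1\) immediately. Finally, for the orientability clause, the Euler class \(e(M)\) vanishes because \(\pspan(M)\geq k\geq 1\) forces \(\chi(M)=0\) by the theorem of Hopf/Samelson/Markus cited in the introduction, and Hopf's theorem then produces a nowhere-vanishing section of \(TM\), which kills \(e(TM)\).
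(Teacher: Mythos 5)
The paper does not actually prove this statement: it is quoted verbatim from Massey--Szczarba \cite[Theorem A]{MS} and used as a black box, so there is no in-paper argument to compare yours against. Judged on its own terms, your reconstruction follows what is surely the intended route. The choice of \(x_i = w_1(L_i)\), \(u_j = w_j(\xi)\), \(v_i = p_i(\xi)\) coming from the splitting of Proposition~\ref{prop:props}(i) is right, the Whitney-class identity is completely and correctly derived, the reduction of the Pontryagin identity to computing \(c_{2q}(TM\otimes_{\R}\C)\) via \(c_1(L\otimes\C)=\beta w_1(L)\) is sound, your handling of the signs via \(2\)-torsion is correct, and the Euler-class clause via \(\chi(M)=0\) is fine.

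The genuine gap is the step you yourself flag as the main obstacle: the identity \(c_{2i+1}(\xi\otimes\C) = (\beta u_{2i})^2 + \beta u_1 v_i\). This is not a formal consequence of what precedes it --- it is the actual content of Massey--Szczarba's theorem --- and the method you propose for it does not obviously work. The splitting principle for \emph{real} bundles (pulling back along the flag bundle so that \(\xi\) becomes a sum of line bundles with "roots" \(a_\alpha\), \(\sigma_j(a_\alpha)=u_j\)) is only guaranteed to be faithful with \(\Z_2\) coefficients; the classes \(c_{2i+1}(\xi\otimes\C)\), \((\beta u_{2i})^2\) and \(\beta u_1 v_i\) are integral \(2\)-torsion classes, and such classes are in general \emph{not} detected by their mod-\(2\) reductions (they may lie in \(2H^*(M;\Z)=\ker(H^*(M;\Z)\to H^*(M;\Z_2))\)). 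So verifying the identity after restriction to a split bundle, or after reduction mod \(2\), does not establish it over \(\Z\) on \(M\) itself. To close the gap you would need either an integral splitting-principle argument (e.g.\ a universal computation in \(H^*(BO(m-k);\Z)\), where the torsion is known to be all of order \(2\) and one can control the kernel of reduction), or simply to cite \cite{MS} for this step, as the paper does for the whole theorem. A smaller point: the statement is vacuous for \(k=0\), and your Euler-class argument implicitly uses \(k\geq 1\); this should be said explicitly.
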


The conditions of Theorem \ref{thm:mands}, give necessary and sufficient conditions for a \(4\)--manifold \(M\) to be line element parallelizable \cite[Theorem B]{MS}. Let us observe that these conditions are not sufficient for \({\rm{dim}}(M) >4.\) \par

\begin{ex}
Take any non-parallelizable odd dimensional sphere \(S^{2d+1}\), for \(d\geq 2\). The characteristic classes of such a sphere vanish, thereby trivially satisfying the conditions of Theorem \ref{thm:mands} for $k=2d+1$. However, \({\rm{pspan}}(S^{2d+1}) = {\rm{span}}(S^{2d+1})\), so \(S^{2d+1}\) is not line element parallelizable. 
\end{ex}

\begin{ex}\label{ex:pspanWall}
We illustrate how Theorem \ref{thm:mands} can be used to bound projective span from above, by showing that 
$\pspan(Q(m,n))\le m+1$ whenever $m,n>0$ with $n$ even. 

Suppose that $\pspan(Q(m,n))>m+1$. Then by Theorem \ref{thm:mands} there are classes $x_1,\ldots , x_{m+2}\in H^1(Q(m,n);\Z_2)$ and $u_j\in H^j(Q(m,n);\Z_2)$ for $j=1,\ldots, 2n-1$ such that 
\[
\begin{aligned}
w_{2n}(Q(m,n)) & = \sum_{i+j=2n} u_j \sigma_i(x_1,\ldots , x_{m+2}) \\
& = \sigma_{2n}(x_1,\ldots , x_{m+2}) +\cdots + u_{2n-1}\sigma_1(x_1,\ldots , x_{m+2}).
\end{aligned}
\]
Note that each term in this expression is divisible by a class in $H^1(Q(m,n);\Z_2)$, therefore $w_{2n}(Q(m,n))$ is in the ideal generated by $H^1(Q(m,n);\Z_2)$.

On the other hand there is a fibre bundle
\[
\xymatrix{
\C P^n \ar[r]^i & Q(m,n) \ar[r] & Q(m,0)
}
\]
induced from the projection $P(m,n)\to \R P^m$ on taking mapping tori, which gives a splitting of the tangent bundle as $TQ(m,n)\cong T_v\C P^n \oplus TQ(m,0)$. Here $T_v\C P^n$ is the vertical tangent bundle, which restricted to the fibre is $T\C P^n$. Therefore $i^*TQ(m,n)\cong T\C P^n \oplus \varepsilon^{m+1}$, from which we see that 
\[
i^*w_{2n}(Q(m,n))=w_{2n}(\C P^n).
\]
 
 Recall that $H^*(\C P^n;\Z_2)\cong \Z_2[a]/(a^{n+1})$ where $|a|=2$. For $n$ even $\chi(\C P^n)$ is odd, and since the top Stiefel--Whitney class is the mod $2$ reduction of the Euler class we see that $w_{2n}(\C P^n)=a^n\neq 0$. This gives a contradiction, since $i^*$ is a graded ring homomorphism which must therefore annihilate the ideal generated by $H^1(Q(m,n);\Z_2)$.
\end{ex}

\begin{rem}
This example will be subsumed by Theorem \ref{thm:main}, in which we calculate the projective span of all Wall manifolds. The upper bound in Section \ref{sec:upper} uses Steer's calculation \cite{Steer} of the stable span of $\C P^n$, which employs \(K\)--theoretic methods. It seems unlikely that ordinary cohomology can give sharp upper bounds in the case of $n$ odd. 
\end{rem}

\section{Upper Bounds}\label{sec:upper}
In this section, we begin the proof of Theorem \ref{thm:main} by proving the upper bound. 
\begin{prop}\label{prop:upper}
Given \(m \geq 1\) and \(n \geq 0\), the projective span of the Wall manifolds satisfies \(\pspan(Q(m,n)) \leq 
2\nu(n+1) + m + 1\), where \(\nu(k)\) denotes the largest exponent such that the corresponding \(2\)--power divides the integer \(k\). 
\end{prop}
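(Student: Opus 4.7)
The plan is to restrict the hypothetical splitting of $TQ(m,n)$ along the fibre inclusion of the bundle $\C P^n \to Q(m,n) \to Q(m,0)$ (the same bundle exploited in Example \ref{ex:pspanWall}), and then to invoke Steer's $K$-theoretic calculation \cite{Steer} of the stable span of complex projective space, which asserts $\sspan(\C P^n) = 2\nu(n+1)$.

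Suppose $\pspan(Q(m,n)) \geq k$. By Proposition \ref{prop:props}(i), the tangent bundle admits a splitting
\[
TQ(m,n) \cong L_1 \oplus \cdots \oplus L_k \oplus \xi
\]
into line bundles $L_j$ and a complementary bundle $\xi$. Since $\C P^n$ is simply connected, the fibre inclusion $i \colon \C P^n \hookrightarrow Q(m,n)$ pulls each $L_j$ back to a trivial line bundle. Combined with the identification $i^* TQ(m,n) \cong T\C P^n \oplus \varepsilon^{m+1}$ already established in Example \ref{ex:pspanWall}, this gives an isomorphism
\[
T\C P^n \oplus \varepsilon^{m+1} \cong \varepsilon^k \oplus i^*\xi,
\]
which exhibits $k$ everywhere linearly independent sections of $T\C P^n \oplus \varepsilon^{m+1}$. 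Hence $\span(T\C P^n \oplus \varepsilon^{m+1}) \geq k$.

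The next step is the elementary bound $\span(\eta \oplus \varepsilon^s) \leq \sspan(\eta) + s$, which follows by stabilising further into the stable range and comparing. Applied with $\eta = T\C P^n$ and $s = m+1$, together with Steer's formula $\sspan(\C P^n) = 2\nu(n+1)$, this yields $k \leq 2\nu(n+1) + m + 1$, which is the desired upper bound.

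The main obstacle sits outside the formal argument: one must have access to Steer's precise evaluation of $\sspan(\C P^n)$, which is nontrivial and relies on complex Clifford modules and $K$-theory. As remarked after Example \ref{ex:pspanWall}, purely cohomological techniques are not expected to produce sharp upper bounds for $n$ odd, so a $K$-theoretic input of this type appears essential. Once Steer's theorem is taken as given, the remainder is a routine fibre-restriction argument leveraging the vanishing of $H^1(\C P^n;\Z_2)$.
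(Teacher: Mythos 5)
Your argument is correct and follows essentially the same route as the paper: restrict the hypothetical splitting of $TQ(m,n)$ along the fibre inclusion of $\C P^n \to Q(m,n) \to Q(m,0)$, use simple connectivity of $\C P^n$ to trivialize the pulled-back line bundles, and conclude via Steer's formula $\sspan(\C P^n)=2\nu(n+1)$. The paper merely packages the first two steps as a general lemma bounding $\pspan(E)$ by $\spspan(F)+\dim(B)$ together with the observation that stable projective span equals stable span for simply connected fibres.
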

To prove Proposition \ref{prop:upper}, we introduce the notion of stable projective span. First recall the notion of stable span. For more information about stable span in particular, we recommend the survey of Korba\v{s} and Zvengrowski \cite[Section 2]{KZ}.

\begin{defn}\label{defn:sspan}
Let \(M\) be a smooth manifold. The \emph{stable span} of \(M\), denoted \(\sspan(M)\), is defined by 
\[
\sspan(M) = \max_{r \geq 1}\left\{\span(TM \oplus \varepsilon^r) -r\right\}.
\]
\end{defn}

Here and below, the span $\span(\xi)$ and projective span $\pspan(\xi)$ of a general vector bundle $\xi$ are defined in the obvious way, in terms of splittings into direct sums. Clearly, for any manifold \(M\), we always have 
\[
{\rm{dim}}(M) \geq \sspan(M) \geq \span(M),
\]
but span and stable span can differ significantly. For example, any sphere \(S^k\) is stably parallelizable, but \(\span(S^k) = 0\) for \(k\) even. For situations where span and stable span coincide, see \cite[\S 20]{Koschorke}. 

The proceeding example plays a critical role in the determination of the projective span of the Wall manifolds.

\begin{ex}[Stable span of complex projective space]
Steer \cite{Steer} has calculated that
\begin{equation}\label{eq:Steer}
    \sspan(\C P^n) = 2\nu(n+1).
\end{equation}

The proof in \cite{Steer} uses Clifford modules to obtain the lower bound, and can be adapted also to real and quaternionic projective spaces. Here, since we are interested mainly in the complex case and require our vector fields to behave a certain way with respect to complex conjugation, we give a somewhat more explicit treatment in Sections \ref{sec:lower} and \ref{sec:Aj}.

Merely for the purposes of illustration, we enumerate a portion of the sequence of the \(2\)--adic valuation of \(n+1\) in Table 1 below along with the corresponding values for the stable span of \(\C P^n.\) Noting that \(\nu(n+1) = 0\) for \(n\) even, we have omitted all zeros from the table. 
\begin{table}[h!]
\label{table:nu}
\caption{}
\resizebox{\textwidth}{!}{%
\begin{tabular}{@{}ccccccccccccccccc@{}}
\toprule
\(n+1\)            & 2 & 4 & 6 & 8 & 10 & 12 & 14 & \(\cdots\) & 28 & 30 & 32 & 34 & 36 & 38 & \(\cdots\) & 1024 \\ \midrule
\(\nu(n+1)\)       & 1 & 2 & 1 & 3 & 1  & 2  & 1  & \(\cdots\) & 2  & 1  & 5  & 1  & 2  & 1  & \(\cdots\) & 10   \\
\(\sspan(\C P^n)\) & 2 & 4 & 2 & 6 & 2  & 4  & 2  & \(\cdots\) & 4  & 2  & 10 & 2  & 4  & 2  & \(\cdots\) & 20   \\ \bottomrule
\end{tabular}%
}
\end{table}
\end{ex}

\begin{defn}\label{defn:spspan}
Let \(\xi\) be a vector bundle. The \emph{stable projective span of \(\xi\)}, denoted \(\spspan(\xi)\), is defined by 
\[
\spspan(\xi) = \max_{r \geq 1}\left\{\pspan(\xi \oplus \varepsilon^r) - r\right\}.
\]
\end{defn}

\begin{ex}[Projective Stiefel Manifolds]
The \emph{projective Stiefel manifold} \(X_{n,k}\) is the quotient of the usual Stiefel manifold
\(
V_{n,k}/\sim
\)
where the equivalence relation is given by identifying each \(k\)--frame \((v_1,\dots, v_k)\) with the \(k\)--frame \((-v_1, \dots, -v_k).\) Despite the fact that the ordinary Stiefel manifolds are parallelizable, almost all of the projective Stiefel manifolds are not even stably parallelizable, see \cite[Theorem 3.2.5]{KZ}.\footnotemark

Interestingly, Lam \cite{Lam} gives the following description of the stable tangent bundle of \(X_{n,k}\).
\[
TX_{n,k} \oplus 
\begin{pmatrix}
k + 1 \\
2
\end{pmatrix} \varepsilon^1 = nk\xi_{n,k},
\]
where \(\xi_{n,k}\) is the line bundle associated with the double covering \(V_{n,k} \to X_{n,k}.\) Hence we see that all \(X_{n,k}\) are stably line element parallelizable.

\footnotetext{Computing the span of \(X_{n,k}\) is classically important due to connections with immersions of projective space \cite[pp.\ 21--23]{KZ}. Additionally, knowledge of \(\span(X_{n,k})\) has applications to the generalized vector field problem---i.e.\ the determination of the span of multiples of the Hopf line bundle \(\xi_{n,1}\) over \(\R P^{n-1}.\)}
\end{ex}

We can now state the following useful lemma. 
\begin{lem}\label{lem:spspanfiber}
Let \(E \to B\) be a smooth fibre bundle with compact connected fibre \(F\). Then 
\[
\pspan(E) \leq \spspan(F) + {\rm{dim}}(B).
\]
\end{lem}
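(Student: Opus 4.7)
The plan is to restrict a hypothetical splitting of $TE$ to a single fibre and exploit the fact that the normal bundle of a fibre is trivial. Fix a basepoint $b_0\in B$, let $F=\pi^{-1}(b_0)$, and let $i\colon F\hookrightarrow E$ be the inclusion. The first step is to observe that $i^*TE \cong TF \oplus \varepsilon^{\dim B}$. This follows because the normal bundle of $F$ in $E$ is isomorphic to the restriction of $\pi^*TB$ to $F$, and since $\pi\circ i$ is constant, this restriction is pulled back from the point $b_0$; hence it is the trivial bundle of rank $\dim(B)$. Equivalently, one may choose an Ehresmann connection on $E\to B$ to obtain a splitting $TE\cong T_vE\oplus \pi^*TB$, whose restriction to $F$ is $TF\oplus \varepsilon^{\dim B}$.

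Next, suppose $\pspan(E)\geq k$. By Proposition~\ref{prop:props}(i), there exist line bundles $L_1,\ldots,L_k$ on $E$ and a complementary bundle $\xi^{\dim E - k}$ with
\[
TE \cong L_1 \oplus \cdots \oplus L_k \oplus \xi^{\dim E - k}.
\]
Pulling back along $i$ and combining with the identification from the first step yields
\[
TF \oplus \varepsilon^{\dim B} \cong i^*L_1 \oplus \cdots \oplus i^*L_k \oplus i^*\xi^{\dim E - k}.
\]
Since each $i^*L_j$ is a line bundle on $F$, this exhibits $TF \oplus \varepsilon^{\dim B}$ as a direct sum containing $k$ line bundles; in the notation of the section, $\pspan(TF\oplus \varepsilon^{\dim B}) \geq k$.

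From the definition of stable projective span (assuming $\dim B \geq 1$; the case $\dim B = 0$ is trivial since then $E=F$), we conclude
\[
\spspan(F) \;\geq\; \pspan(TF\oplus \varepsilon^{\dim B}) - \dim B \;\geq\; k - \dim B.
\]
Since this holds for every $k\leq \pspan(E)$, rearranging gives $\pspan(E)\leq \spspan(F)+\dim B$, as required. There is no real obstacle here: the argument is essentially bookkeeping once the triviality of the fibre's normal bundle is noted. The only subtlety is remembering that restricting a line bundle to a submanifold yields a line bundle, so the \emph{projective} span behaves well under the restriction, even though the individual line bundles $i^*L_j$ may not be trivial.
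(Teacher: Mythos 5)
Your proof is correct and follows essentially the same route as the paper: restrict the splitting of $TE$ to a fibre, use $i^*TE \cong TF \oplus \varepsilon^{\dim B}$, and invoke the definition of stable projective span. You simply supply more detail (the triviality of the fibre's normal bundle, the edge case $\dim B = 0$) than the paper's one-line argument.
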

\begin{proof}
The proof mimics the usual proof for span. Letting \(i \colon F \xhookrightarrow{} E\) denote the fibre inclusion, there is a bundle isomorphism \(i^*TE \cong TF \oplus \varepsilon^{{\rm{dim}}(B)}.\) Therefore, if \(TE\) splits off \(k\) line bundles, then so does \(TF \oplus \varepsilon^{{\rm{dim}}(B)},\) implying that \(\spspan(F) + {\rm{dim}}(B) \geq \pspan(E),\) as desired.
\end{proof}

\begin{proof}[Proof of Proposition \ref{prop:upper}]
We apply Lemma \ref{lem:spspanfiber} to the fiber bundle
\[
\C P^n \longrightarrow Q(m,n) \longrightarrow Q(m,0).
\]
This gives 
\begin{equation*}
\begin{aligned} 
    \pspan(Q(m,n)) & \leq \spspan(\C P^n) + {\rm{dim}}(Q(m,0)) \\
    & = \spspan(\C P^n) + m + 1 \\
    & = \sspan(\C P^n) + m + 1 \\
    & = 2\nu(n+1) + m + 1.
\end{aligned}
\end{equation*}
Here the penultimate equality holds by Proposition \ref{prop:props}(iv) since \(\C P^n\) is simply-connected, and the last equality holds by (\ref{eq:Steer}).
\end{proof}

\section{Lower Bounds}\label{sec:lower}
In this section we fix $m\ge1$ and $n\ge0$, and explain how to find optimal lower bounds for the projective span of the Wall manifold $Q(m,n)$, thereby completing the proof of Theorem \ref{thm:main}. Henceforth, write \(\nu \coloneqq \nu(n+1)\) and \(\delta \coloneqq 2\nu + m + 1.\)

\begin{prop}\label{prop:lower}
Let \(m \geq 1\) and \(n \geq 0.\) Then \(
\pspan(Q(m,n)) \geq \delta
\).
\end{prop}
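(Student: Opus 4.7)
The strategy is to realise $Q(m,n)$ as a quotient of $\tilde Q \coloneqq S^m\times \C P^n\times S^1$ by a free action of the compact Lie group $G = \Z/2\times \Z/2$, and then to exhibit $\delta$ linearly independent $G$-quasi-invariant vector fields on $\tilde Q$. Write $\tau\in G$ for the generator acting by the antipodal map on $S^m$ together with complex conjugation on $\C P^n$ (and trivially on $S^1$), and $\sigma\in G$ for the generator acting by reflection in the last coordinate of $S^m$ together with the antipodal map on $S^1$ (and trivially on $\C P^n$). These generators commute, each acts freely, and composing the double covers $S^m\times\C P^n\to P(m,n)$ and $P(m,n)\times S^1\to Q(m,n)$ identifies $\tilde Q/G$ with $Q(m,n)$. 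By the principle recalled in the introduction, any fibrewise linearly independent family of $G$-quasi-invariant vector fields on $\tilde Q$ descends to a family of linearly independent line fields on $Q(m,n)$.

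For $m+1$ of the required fields, I use the trivialisation $TS^m\oplus TS^1\cong \varepsilon^{m+1}$ coming from the standard embedding $S^m\hookrightarrow \R^{m+1}$, by identifying the outward unit normal with $\partial_\theta$. The standard basis $e_1,\dots,e_{m+1}$ of $\R^{m+1}$ then yields vector fields
\[
E_i(x,\theta) = \bigl(e_i-(e_i\cdot x)\,x,\ (e_i\cdot x)\,\partial_\theta\bigr) \in T_xS^m\oplus T_\theta S^1,
\]
extended to $\tilde Q$ by zero in the $\C P^n$ direction. A short computation shows each $E_i$ is $G$-quasi-invariant: $\tau_*E_i=-E_i\circ\tau$ for every $i$, while $\sigma_*E_i=E_i\circ\sigma$ for $1\le i\le m$ and $\sigma_*E_{m+1}=-E_{m+1}\circ\sigma$.

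For the remaining $2\nu$ fields, I appeal to Steer's calculation $\sspan(\C P^n)=2\nu$, which supplies $2\nu$ sections of $T\C P^n\oplus\varepsilon^r$ (for some $r\ge 0$) that are linearly independent at every point. The essential refinement, carried out in the remainder of Section~\ref{sec:lower} and in Section~\ref{sec:Aj}, is to arrange Steer's Clifford-module construction so that each such section transforms by a sign under complex conjugation $\C P^n\to\C P^n$, and so is $\tau$-quasi-invariant; $\sigma$-quasi-invariance is automatic because $\sigma$ acts trivially on the $\C P^n$ factor. The stabilising $\varepsilon^r$ is then absorbed into $r$ of the trivial summands spanned by the $E_i$, producing a total of $(m+1-r)+(r+2\nu)=\delta$ linearly independent $G$-quasi-invariant vector fields on $\tilde Q$. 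Descending to $\tilde Q/G = Q(m,n)$ yields the required $\delta$ line fields.

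The principal obstacle is the conjugation-equivariant version of Steer's Clifford-module construction on $\C P^n$; this is what forces the more explicit treatment of stable vector fields on complex projective space developed in the subsequent sections. Once that is in place, the assembly above is routine and Proposition~\ref{prop:lower} follows.
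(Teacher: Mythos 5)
Your proposal follows essentially the same route as the paper: the same presentation of $Q(m,n)$ as $(\C P^n\times S^m\times S^1)/(\Z_2\times\Z_2)$ (with the roles of $\sigma$ and $\tau$ relabelled), the same $m+1$ quasi-invariant fields coming from the trivialisation $TS^m\oplus TS^1\cong\varepsilon^{m+1}$, and the same deferral of the conjugation-quasi-invariant refinement of Steer's Clifford-module construction to the later sections, where the paper takes $r=1$ and produces $2\nu+1$ sections of $T\C P^n\oplus\varepsilon^1$ using the $e_1$-direction. One small correction: $\sspan(\C P^n)=2\nu$ supplies $2\nu+r$ (not $2\nu$) linearly independent sections of $T\C P^n\oplus\varepsilon^r$, which is what your final count $(m+1-r)+(r+2\nu)=\delta$ already implicitly uses.
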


The proof is constructive. We will write down $\delta$ linearly independent line fields on \(Q(m,n)\) which descend from linearly independent vector fields on $\C P^n\times S^m\times S^1$ with suitable invariance properties. The method is encapsulated by the following definition. 

\begin{defn}
Let $M$ be a smooth $G$-manifold. A vector field $\xi: M\to TM$ is called \emph{$G$-quasi-invariant} if for all $g\in G$ there exists $\varepsilon_g\in \{\pm 1\}$ such that 
\[
dg\big(\xi(x)\big) = \varepsilon_g \xi(gx)\qquad\mbox{for all }x\in M.
\]
\end{defn}

If $G$ is a finite group acting freely on $M$, then the quotient manifold $M/G$ has tangent bundle isomorphic to the quotient bundle $(TM)/G$. It is easy to see that linearly independent $G$-quasi-invariant vector fields on $M$ yield linearly independent line fields on $M/G$.

We begin by fixing some notation. Endow $\R^{m+1}$ with the usual Euclidean inner product $\la\cdot,\cdot\ra_\R$, and regard $S^m\subseteq\R^{m+1}$ as the set of vectors $v$ with $\langle v,v\rangle_\R =1$. We identify the total space $TS^m$ of the tangent bundle with 
\[
\{(v,u)\in S^m\times \R^{m+1} \mid \langle v,u\rangle_\R = 0\}.
\]

Regard $S^1\subseteq \C$ as the set of complex numbers of unit modulus so that the total space $TS^1$ can be identified with 
\[
\{(\lambda, \mu)\in S^1\times \C \mid \lambda\overline{\mu}\in i\R\}.
\]

Finally, equip $\C^{n+1}$ with the standard Hermitian inner product $\la\cdot, \cdot\ra$, and regard $S^{2n+1}\subseteq\C^{n+1}$ as the set of complex vectors $\mathbf{z}$ with $\langle \mathbf{z},\mathbf{z}\rangle=1$. Complex projective space $\C P^n$ is subsequently the quotient $S^{2n+1}/S^1$. To describe the total space $T\C P^n$ we define
\[
W_{n+1,2}=\{(\mathbf{z},\mathbf{w})\in S^{2n+1}\times\C^n \mid \la \mathbf{z},\mathbf{w}\ra=0\}.
\]
The group $S^1\subseteq \C$ acts diagonally on $W_{n+1,2}$. Identify the quotient space $W_{n+1,2}/S^1$ with $T\C P^n$ and the map induced by projection onto the first coordinate with the bundle projection $T\C P^n\to \C P^n$.

There are smooth involutions $\sigma$ and $\tau$ on $\C P^n \times S^m\times S^1$ given by
\[
\sigma\big([\mathbf{z}],v,\lambda\big)=    \big([\mathbf{\overline{z}}],-v,\lambda\big), \qquad \tau\big([\mathbf{z}],v,\lambda\big) = \big([\mathbf{z}],\rho(v),-\lambda\big).
\]
Here $\overline{\mathbf{z}}\in \C^{n+1}$ is the vector obtained from $\mathbf{z}\in \C^{n+1}$ by complex conjugation in each component, and $\rho:\R^{m+1}\to \R^{m+1}$ denotes reflection in the plane $x_{m+1}=0$. It is clear that $\sigma$ and $\tau$ commute with one another, and therefore generate a free action of $\Z_2\times\Z_2$ on $\C P^n\times S^m\times S^1$. Ultimately, the quotient \((\C P^n \times S^m \times S^1)/\Z_2 \times \Z_2\) is the Wall manifold $Q(m,n)$.

The tangent bundle $TQ(m,n)$ is therefore the quotient bundle $(T\C P^n\times TS^m\times TS^1)/\Z_2\times\Z_2$. Under the above identifications, the differentials of the involutions $\sigma$ and $\tau$ act on $T\C P^n\times TS^m\times TS^1$ as follows:
\begin{equation}\label{eq:sigma}
d\sigma\big([\mathbf{z},\mathbf{w}],(v,u),(\lambda,\mu)\big) = \big([\overline{\mathbf{z}},\overline{\mathbf{w}}],(-v,-u),(\lambda,\mu)\big),
\end{equation}
\begin{equation}\label{eq:tau}
d\tau \big([\mathbf{z},\mathbf{w}],(v,u),(\lambda,\mu)\big) =\big([\mathbf{z},\mathbf{w}],(\rho(v),\rho(u)),(-\lambda,-\mu)\big).
\end{equation}

Thus to prove Proposition \ref{prop:lower}, it suffices by the above description of $TQ(m,n)$ to find $\delta$ linearly independent vector fields $\xi_1,\ldots , \xi_\delta$ on the manifold $\C P^n\times S^m\times S^1$ which satisfy 
\begin{equation}\label{eq:invariance}
d\sigma\circ \xi_j = \pm \xi_j \circ\sigma\; \text{ and }\; d\tau \circ\xi_j = \pm \xi_j\circ\tau, \text{ for }j=1,\ldots,\delta.
\end{equation}
For properties (\ref{eq:invariance}) say precisely that the $\xi_j$ are $\Z_2\times\Z_2$--quasi-invariant and consequently descend to $\delta$ well-defined and linearly independent line fields $\{\xi_j\}$ on the quotient manifold $Q(m,n)=(\C P^n\times S^m\times S^1)/\mathbb{Z}_2\times\Z_2$.

\subsection{Constructing the vector fields: Part I}
The last $m$ vector fields arise from the trivialisation of $TS^m\times TS^1$ viewed as the stable tangent bundle of $S^m$. Let $e_1,\ldots, e_{m+1}$ be the standard basis vectors for $\R^{m+1}$. For $j=2,\ldots , m+1$, set
\begin{equation}\label{eq:lastmvecs}
\xi_{2\nu+j} \big([\mathbf{z}],v,\lambda\big):=\big([\mathbf{z},\mathbf{0}],(v,e_j-\la v,e_j\ra_\R v),(\lambda, -i\la v,e_j\ra_\R\lambda)\big).
\end{equation}
 It is easily checked using formulae (\ref{eq:sigma}) and (\ref{eq:tau}) that 
 \begin{enumerate}[label=\(\bullet\)]
     \item \(d\sigma \circ\xi_{2\nu+j} = - \xi_{2\nu+j} \circ\sigma\), for \(2\le j \le m+1\),
    \item \(d\tau \circ\xi_{2\nu+j} =\xi_{2\nu+j} \circ\tau\), for \(2\le j \le m\), and
 \item \(d\tau\circ\xi_{2\nu + m + 1} = -\xi_{2\nu + m + 1}\circ\tau.\)
 \end{enumerate}
 (Here, we use that $\rho$ is an orthogonal involution fixing $e_2,\ldots,e_m$ and negating $e_{m+1}$, so that $\la \rho(v), e_j\ra_\R=\la v,e_j\ra_\R$ for $j=2,\ldots , m$ and $\la \rho(v), e_{m+1} \ra_\R=-\la v,e_{m+1}\ra_\R$.)  Therefore, the \(\xi_{2\nu+j}\) are \(\Z_2 \times \Z_2\)--quasi-invariant and thus descend to well-defined line fields \(\{\xi_{2\nu+j}\}\) on \(Q(m,n)\) for \(j = 2,\dots, m+1.\)
 
 \subsection{Constructing the vector fields: Part II}
  
In order to construct the first $2\nu+1$ vector fields, we can use the trivialization of \(TS^m \times TS^1 \cong \varepsilon^{m+1}\) to construct more vector fields on \(\C P^n \times S^m \times S^1\), since \(\sspan(\C P^n) > 0\) for \(n\) odd by by (\ref{eq:Steer}).

To this end, we need the following result, the proof of which will be postponed until Section \ref{sec:Aj}. First, recall that  complex conjugation on \(\C^{n+1}\) is anti-unitary with respect to the standard Hermitian inner product on \(\C^{n+1}\). That is 
\begin{equation}\label{eq:antiunitary}
    \langle \overline{\mathbf{z}}, \overline{\mathbf{w}} \rangle = \overline{\langle \mathbf{z}, \mathbf{w}\rangle},
\end{equation}
for all \(\mathbf{z}, \mathbf{w} \in \C^{n+1}.\)

\begin{lem}\label{lem:Aj}
For \(j = 1, \dots, 2\nu +1\), there exist automorphisms \(A_j \colon \C^{n+1} \to \C^{n+1}\) with the following properties:
\begin{equation}\label{eq:anticomm}
A_j\circ A_k = -A_k\circ A_j\quad\mbox{ for all }j\neq k,
\end{equation}
\begin{equation}\label{eq:skewhermitian}
    \langle \mathbf{z}, A_j(\mathbf{w})\rangle + \langle A_j(\mathbf{z}), \mathbf{w} \rangle = 0\quad\mbox{ for all }\mathbf{z}, \mathbf{w} \in \C^{n+1},
\end{equation}
\begin{equation}\label{eq:commconj}
    \text{There exists } \varepsilon_j \in \{\pm 1\} \text{ such that } A_j(\overline{\mathbf{z}}) =\varepsilon_j\overline{A_j(\mathbf{z})}\quad\mbox{ for all }\mathbf{z} \in \C^{n+1}.
\end{equation}
\end{lem}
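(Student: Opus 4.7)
The plan is to realise $\C^{n+1}$ as a module over the complex Clifford algebra on $2\nu+1$ skew-Hermitian anticommuting generators, via an explicit tensor-product construction built from the Pauli matrices $\sigma_1, \sigma_2, \sigma_3$. Writing $n+1 = 2^\nu q$ with $q$ odd, identify $\C^{n+1}$ with $(\C^2)^{\otimes \nu} \otimes_{\C} \C^q$. The three matrices $i\sigma_1, i\sigma_2, i\sigma_3$ are already pairwise anticommuting skew-Hermitian endomorphisms of $\C^2$, and since $\sigma_1, \sigma_3$ are real while $\sigma_2$ is purely imaginary one computes $\overline{i\sigma_1} = -i\sigma_1$, $\overline{i\sigma_2} = +i\sigma_2$, $\overline{i\sigma_3} = -i\sigma_3$, so condition (\ref{eq:commconj}) already holds on $\C^2$ with $\varepsilon = -1, +1, -1$ respectively.

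To promote this to arbitrary $\nu$, I would define on $(\C^2)^{\otimes\nu}$ the operators
\[
e_{2k-1} := \sigma_3^{\otimes(k-1)} \otimes (i\sigma_1) \otimes I^{\otimes(\nu-k)}, \qquad
e_{2k} := \sigma_3^{\otimes(k-1)} \otimes (i\sigma_2) \otimes I^{\otimes(\nu-k)}
\]
for $k = 1, \ldots, \nu$, together with a final operator $e_{2\nu+1} := \sigma_3^{\otimes(\nu-1)} \otimes (i\sigma_3)$, and set $A_j := e_j \otimes I_{\C^q}$ on $\C^{n+1}$. Condition (\ref{eq:skewhermitian}) is then immediate: each $e_j$ is a tensor product containing exactly one skew-Hermitian factor (an $i\sigma_a$), with all remaining factors Hermitian. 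Condition (\ref{eq:commconj}) is equally transparent: $\sigma_3$, $I$, and $I_{\C^q}$ all have real matrix entries while $\overline{i\sigma_a} = \pm i\sigma_a$, so $\overline{A_j} = \varepsilon_j A_j$ for an appropriate sign $\varepsilon_j \in \{\pm 1\}$.

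The step demanding real attention is the pairwise anticommutation (\ref{eq:anticomm}), which I would verify slot-by-slot in the tensor product. The operators have been arranged so that any two $e_j, e_{j'}$ disagree in the correct parity of slots where distinct Pauli matrices or a Pauli matrix and the identity meet, and each such mismatch contributes a sign $-1$ via $\sigma_a \sigma_b = -\sigma_b \sigma_a$ for $a \neq b$. The main obstacle is the case-by-case sign bookkeeping across the distinct patterns (two indices among the first $2\nu$ operators, an index equal to $2\nu+1$, and so on); in each case the placement of the $\sigma_3$-tails and the Pauli heads is engineered so that exactly one net $-1$ appears, so that the operators pairwise anticommute.
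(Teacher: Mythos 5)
Your construction is correct and is essentially the paper's: both realise $\C^{n+1}$ as $(\C^2)^{\otimes\nu}$ with odd multiplicity (your $\C^q$ tensor factor versus the paper's $b$-fold direct sum of spinor spaces $\Delta_{2\nu+1}$) and take the $A_j$ to be tensor products of $2\times 2$ matrices representing the complex Clifford algebra $\mathcal{C}^c_{2\nu+1}$, each with exactly one skew-Hermitian factor and every factor either real or purely imaginary, so that (\ref{eq:skewhermitian}) and (\ref{eq:commconj}) are read off slot by slot, and your Jordan--Wigner placement of the $\sigma_3$-strings does yield pairwise anticommutation in every case (same $k$, distinct $k<k'$, and pairing with $e_{2\nu+1}$), just as the paper's mirror-image arrangement of $E$'s and $T$'s does. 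The only point to patch is the degenerate case $\nu=0$, where your $\sigma_3^{\otimes(\nu-1)}\otimes(i\sigma_3)$ is undefined and one should simply take the single operator $A_1=i\cdot\mathrm{id}$, as the paper does for $n=0$.
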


Assuming Lemma \ref{lem:Aj}, we can define the remaining vector fields. To ease the notation, we set 
\[
\beta_j(\mathbf{z}):=\la \mathbf{z}, A_j(\mathbf{z})\ra.
\]
 Note that
\begin{enumerate}[label=\(\bullet\)]
    \item Property (\ref{eq:skewhermitian}) with $\mathbf{w}=\mathbf{z}$ together with the usual conjugate symmetry implies that $\beta_j(\mathbf{z})$ is purely imaginary; and 
    \item Properties (\ref{eq:antiunitary}) and (\ref{eq:commconj}) together imply that  $\beta_j(\overline{\mathbf{z}})=\varepsilon_j \overline{\beta_j(\mathbf{z})}=-\varepsilon_j \beta_j(\mathbf{z})$ for some $\varepsilon_j\in \{\pm 1\}$. 
\end{enumerate} Subsequently, for $j=1,\ldots , 2\nu+1$, we define
\begin{equation}\label{eq:first2v+1vecs}
\begin{aligned}
 \xi_{j}\big([\mathbf{z}],v,\lambda\big):= &\big([\mathbf{z}, A_j(\mathbf{z}) + \beta_j(\mathbf{z}) \mathbf{z}] , \\
 &\big(v, i\beta_j(\mathbf{z}) (e_1- \la v,e_1\ra_\R v)\big), \\
 & (\lambda, \beta_j(\mathbf{z})\la v,e_1\ra_\R \lambda )\big). 
 \end{aligned}
\end{equation}

We now check that the vector fields defined in (\ref{eq:first2v+1vecs}) satisfy the desired properties. 
\begin{lem}\label{lem:invariance}
For \(j = 1, \dots, 2\nu+1\), the vector fields \(\xi_j\) of \emph{(\ref{eq:first2v+1vecs})} are well-defined and satisfy the invariance properties \emph{(\ref{eq:invariance})}.
\end{lem}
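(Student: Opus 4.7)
The plan is to verify, by direct componentwise computation, that each $\xi_j$ of (\ref{eq:first2v+1vecs}) is a well-defined section of $T(\C P^n \times S^m \times S^1)$ and transforms as required under $\sigma$ and $\tau$, making systematic use of the properties from Lemma \ref{lem:Aj} and the two observations about $\beta_j$ noted in the text.

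First I would dispense with well-definedness in two steps. (a) Each ``second component'' lies in the appropriate tangent fibre. For the $\C P^n$ factor, orthogonality reduces to $\la \mathbf{z}, A_j(\mathbf{z}) + \beta_j(\mathbf{z})\mathbf{z}\ra = \beta_j(\mathbf{z}) + \overline{\beta_j(\mathbf{z})}$, which vanishes because $\beta_j(\mathbf{z})$ is purely imaginary. For $TS^m$, the projection $e_1 - \la v,e_1\ra_\R v$ is automatically orthogonal to $v$, and scaling by the \emph{real} number $i\beta_j(\mathbf{z})$ preserves this. For $TS^1$, one checks that $\lambda\cdot\overline{\beta_j(\mathbf{z})\la v,e_1\ra_\R\lambda} = -\beta_j(\mathbf{z})\la v,e_1\ra_\R \in i\R$, again using imaginariness of $\beta_j$. (b) The formula is $S^1$-equivariant on the $\C P^n$ factor: $\C$-linearity of $A_j$ combined with $\beta_j(\zeta\mathbf{z}) = \beta_j(\mathbf{z})$ for $\zeta\in S^1$ implies it descends to $T\C P^n = W_{n+1,2}/S^1$.

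For the invariance under $\sigma$, the strategy is to substitute $\overline{\mathbf{z}}$ for $\mathbf{z}$ and $-v$ for $v$ in (\ref{eq:first2v+1vecs}) and compare, componentwise, with the action of $d\sigma$ given by (\ref{eq:sigma}). The key inputs are (\ref{eq:commconj}), which rewrites $A_j(\overline{\mathbf{z}})$ as $\varepsilon_j\overline{A_j(\mathbf{z})}$, and the identity $\beta_j(\overline{\mathbf{z}}) = -\varepsilon_j\beta_j(\mathbf{z})$ noted in the text. These combine to give
\[
A_j(\overline{\mathbf{z}}) + \beta_j(\overline{\mathbf{z}})\overline{\mathbf{z}} \;=\; \varepsilon_j\,\overline{A_j(\mathbf{z}) + \beta_j(\mathbf{z})\mathbf{z}},
\]
so the $T\C P^n$ component picks up a factor of $\varepsilon_j$. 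A short calculation shows that in the $TS^m$ component the sign change of $\beta_j$ combines with the double sign flip from $-v$ (one from evaluation at $-v$, one from $d\sigma$ acting as $u\mapsto -u$) to yield the same factor $\varepsilon_j$; similarly in the $TS^1$ component the sign change of $\beta_j$ combines with $\la -v, e_1\ra_\R = -\la v,e_1\ra_\R$ to give $\varepsilon_j$. One concludes $d\sigma \circ \xi_j = \varepsilon_j\,\xi_j \circ \sigma$. For $\tau$ the analogous computation uses (\ref{eq:tau}) together with the fact that $\rho$ is an $\R$-linear involution fixing $e_1$, which gives $\la\rho(v), e_1\ra_\R = \la v,e_1\ra_\R$ and $e_1 - \la v,e_1\ra_\R \rho(v) = \rho(e_1 - \la v,e_1\ra_\R v)$; since $\tau$ also fixes $\mathbf{z}$ and the extra $-\lambda$ is absorbed by the $TS^1$ formula, one obtains $d\tau \circ \xi_j = \xi_j \circ \tau$ with no sign.

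The main obstacle is just the careful sign bookkeeping across complex conjugation, through the Hermitian form, $A_j$, and $\beta_j$. The substantive point — which makes the definition (\ref{eq:first2v+1vecs}) work at all — is that property (\ref{eq:commconj}) introduces the \emph{same} sign $\varepsilon_j$ in every component of $\xi_j$ under $\sigma$, and no sign at all under $\tau$, so each $\xi_j$ is $\Z_2\times\Z_2$-quasi-invariant with a single global sign.
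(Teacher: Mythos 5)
Your proposal is correct and follows essentially the same route as the paper: direct componentwise verification of well-definedness (orthogonality via the purely imaginary $\beta_j$, and $S^1$-invariance via $\beta_j(\zeta\mathbf{z})=\beta_j(\mathbf{z})$), followed by sign-tracking under $\sigma$ using $A_j(\overline{\mathbf{z}})=\varepsilon_j\overline{A_j(\mathbf{z})}$ and $\beta_j(\overline{\mathbf{z}})=-\varepsilon_j\beta_j(\mathbf{z})$, and under $\tau$ using that $\rho$ is orthogonal and fixes $e_1$. The key identity you display for the $T\C P^n$ component and the matching factor $\varepsilon_j$ in the other two components are exactly the computation the paper carries out.
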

\begin{proof}
Firstly, \(\xi_j\) is well-defined since
\begin{enumerate}[label=\(\bullet\)]
    \item \(
\la \mathbf{z}, A_j(\mathbf{z}) + \beta_j(\mathbf{z}) \mathbf{z} \ra = \beta_j(\mathbf{z}) + \overline{\beta_j(\mathbf{z})}\la \mathbf{z},\mathbf{z}\ra =\beta_j(\mathbf{z}) -\beta_j(\mathbf{z}) \cdot 1 = 0,
\) and also
    \item $\beta_j(\omega\mathbf{z})=\beta_j(\mathbf{z})$ implies $\xi_{j}([\omega\mathbf{z}],v,\lambda)=\xi_{j}([\mathbf{z}],v,\lambda)$ for all $\omega\in S^1\subseteq \C$. 
\end{enumerate}
We now check the invariance property under $\sigma$. Using formula (\ref{eq:sigma}) we have
\begin{multline*}
d\sigma\circ \xi_{j} \big([\mathbf{z}],v,\lambda\big) \\
 = \big([\overline{\mathbf{z}}, \overline{A_j(\mathbf{z}) + \beta_j(\mathbf{z}) \mathbf{z}}] , (-v, -i\beta_j(\mathbf{z}) e_1+i\beta_j(\mathbf{z})\la v, e_1\ra_\R v)), (\lambda, \beta_j(\mathbf{z})\la v,e_1\ra_\R \lambda)\big)\\
\; =  \big([\overline{\mathbf{z}}, \overline{A_j(\mathbf{z})} - \beta_j(\mathbf{z}) \overline{\mathbf{z}}] , (-v, -i\beta_j(\mathbf{z}) e_1+i\beta_j(\mathbf{z})\la v, e_1\ra_\R v)), (\lambda, \beta_j(\mathbf{z})\la v,e_1\ra_\R \lambda)\big).
\end{multline*}
On the other hand, we have
\begin{multline*}
\xi_{j}\circ \sigma \big([\mathbf{z}],v,\lambda\big) 
 = \xi_{j}  \big([\overline{\mathbf{z}}],-v,\lambda\big) \\
                   \hspace{-0.8cm}     = \big([\overline{\mathbf{z}}, A_j(\overline{\mathbf{z}}) + \beta_j(\overline{\mathbf{z}})\overline{\mathbf{z}}] , (-v, i\beta_j(\overline{\mathbf{z}}) e_1-i\beta_j(\overline{\mathbf{z}})\la v, e_1\ra_\R v)), (\lambda, -\beta_j(\overline{\mathbf{z}})\la v, e_1\ra_\R \lambda)\big) \\
                           = \big([\overline{\mathbf{z}}, \varepsilon_j\overline{A_j(\mathbf{z})} -\varepsilon_j \beta_j(\mathbf{z})\overline{\mathbf{z}}] , (-v, -\varepsilon_j i\beta_j(\mathbf{z}) e_1 + \varepsilon_ji\beta_j(\mathbf{z})\la v, e_1\ra_\R v)), (\lambda, \varepsilon_j\beta_j(\mathbf{z})\la v, e_1\ra_\R \lambda)\big) \\
                           = \varepsilon_j d\sigma\circ \xi_{j} \big([\mathbf{z}],v,\lambda\big).\hspace{10cm}
\end{multline*}
Now we check invariance under $\tau$. Using formula (\ref{eq:tau}) we have
\begin{multline*}
d\tau\circ \xi_{j} \big([\mathbf{z}],v,\lambda\big) \\
\hspace{-0.6cm}  = \big([\mathbf{z}, A_j(\mathbf{z}) + \beta_j(\mathbf{z}) \mathbf{z}] , \big(\rho(v), i\beta_j(\mathbf{z}) (e_1-\la v, e_1\ra_\R \rho(v))\big), (-\lambda, -\beta_j(\mathbf{z})\la v, e_1\ra_\R \lambda)\big)\\
     = \big([\mathbf{z}, A_j(\mathbf{z}) + \beta_j(\mathbf{z}) \mathbf{z}] , \big(\rho(v), i\beta_j(\mathbf{z}) (e_1-\la \rho(v), e_1\ra_\R \rho(v))\big), (-\lambda, -\beta_j(\mathbf{z})\la \rho(v), e_1\ra_\R \lambda)\big) \\
     = \xi_{j}\circ \tau \big([\mathbf{z}],v,\lambda\big).\hspace{10cm}
\end{multline*}
(Since $\rho$ is an orthogonal involution fixing $e_1$ we have $\la \rho(v),  e_1\ra_\R=\la v, e_1\ra_\R$.)
\end{proof}

\subsection{Constructing the vector fields: Part III}
In order to complete the proof of Proposition \ref{prop:lower}, it remains only to show that the vector fields $\xi_1,\ldots , \xi_{\delta}$ defined above in (\ref{eq:lastmvecs}) and (\ref{eq:first2v+1vecs}) are linearly independent. This is equivalent to the following lemma. 
\begin{lem}\label{lem:linearindep} 
All of the following are satisfied. 
\begin{enumerate}[label=\emph{(\roman*)}]
    \item The first \(2\nu +1\) vector fields \(\{\xi_1, \dots, \xi_{2\nu+1}\}\) as defined in \emph{(\ref{eq:first2v+1vecs})} form a linearly independent set. 
    \item The last \(m\) vector fields \(\{\xi_{2\nu+2}, \dots, \xi_{2\nu + m+1}\}\) as defined in \emph{(\ref{eq:lastmvecs})} form a linearly independent set.
    \item \(
    {\rm{Span}}(\xi_1, \dots, \xi_{2\nu+1} ) \cap {\rm{Span}}(\xi_{2\nu+2}, \dots, \xi_{2\nu + m+1})  = \{0\}
    \). 
\end{enumerate}
\end{lem}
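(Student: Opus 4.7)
The plan is to fix an arbitrary point $([\mathbf{z}], v, \lambda) \in \C P^n \times S^m \times S^1$ and check linear independence of the vectors $\xi_j([\mathbf{z}], v, \lambda)$ pointwise. Since the last $m$ vector fields all have vanishing $T\C P^n$--component, part (iii) will follow formally from parts (i) and (ii).

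For part (ii), if $\sum_{j=2}^{m+1} c_j \xi_{2\nu+j}([\mathbf{z}], v, \lambda) = 0$, the $TS^1$--component (using $\lambda\ne 0$) yields $\sum_j c_j \la v, e_j\ra_\R = 0$, and substituting back into the $TS^m$--component leaves $\sum_j c_j e_j = 0$. Linear independence of $e_2, \dots, e_{m+1}$ in $\R^{m+1}$ then forces each $c_j = 0$.

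For part (i), a vanishing combination $\sum_{j=1}^{2\nu+1} c_j \xi_j([\mathbf{z}], v, \lambda) = 0$ has $T\C P^n$--component
\[
\sum_j c_j A_j(\mathbf{z}) + \gamma\,\mathbf{z} = 0, \qquad \gamma := \sum_j c_j \beta_j(\mathbf{z}).
\]
Taking the Hermitian inner product with $\mathbf{z}$ and using $\beta_j(\mathbf{z}) = \la\mathbf{z}, A_j(\mathbf{z})\ra$ yields $\gamma = -\gamma$, so $\gamma = 0$ and hence $\sum_j c_j A_j(\mathbf{z}) = 0$ (the $TS^m$-- and $TS^1$--components are each proportional to $\gamma$ and so vanish automatically). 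The main step is then the $\R$--linear independence of $A_1(\mathbf{z}), \dots, A_{2\nu+1}(\mathbf{z})$: using anti-commutativity (\ref{eq:anticomm}) and the skew-Hermitian property (\ref{eq:skewhermitian}), a short computation shows that $\la A_j(\mathbf{z}), A_k(\mathbf{z})\ra$ is purely imaginary whenever $j\ne k$, so these vectors are pairwise orthogonal with respect to the real inner product $\mathrm{Re}\,\la\cdot,\cdot\ra$ on $\C^{n+1} \cong \R^{2n+2}$. Being nonzero (since each $A_j$ is an automorphism), they are $\R$--linearly independent, forcing every $c_j = 0$.

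Finally, part (iii) is formal: any element of $\mathrm{Span}(\xi_{2\nu+2}, \dots, \xi_{2\nu+m+1})$ has zero $T\C P^n$--component, so an element in the intersection, written as $\sum_j a_j \xi_j$ with $j\in\{1,\ldots,2\nu+1\}$, has vanishing $T\C P^n$--component, which by the argument of part (i) forces every $a_j = 0$; the remaining identity $\sum_j b_j \xi_{2\nu+j} = 0$ then forces every $b_j = 0$ by part (ii). The main potential obstacle is establishing $\R$--linear independence of $A_1(\mathbf{z}), \dots, A_{2\nu+1}(\mathbf{z})$ at every $\mathbf{z} \in S^{2n+1}$; this rests on the Clifford-module structure to be produced in Lemma \ref{lem:Aj}.
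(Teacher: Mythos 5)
Your part (ii) and your orthogonality computation for the vectors $A_j(\mathbf{z})$ agree with the paper's proof and are correct. The gap is in your derivation of $\gamma=0$ in part (i), and it propagates to part (iii). Pairing the equation $\sum_j c_j A_j(\mathbf{z})+\gamma\,\mathbf{z}=0$ with $\mathbf{z}$ gives no information: each $\beta_j(\mathbf{z})$ is purely imaginary, hence so is $\gamma$, and because the Hermitian product is conjugate-linear in one slot the pairing evaluates identically to $-\gamma+\gamma=0$ (equivalently $\gamma+\overline{\gamma}=0$); you do not obtain $\gamma=-\gamma$ over $\R$. Worse, the intermediate claim you are after --- that vanishing of the $T\C P^n$--component of $\sum_j c_j\xi_j$ forces all $c_j=0$ --- is false. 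For $n$ even one has $\nu=0$ and $A_1=i\cdot\mathrm{id}$, so $A_1(\mathbf{z})+\beta_1(\mathbf{z})\mathbf{z}=i\mathbf{z}-i\mathbf{z}=0$ identically: $\xi_1$ has vanishing $T\C P^n$--component everywhere while being a nonzero vector field. More generally this occurs at any $\mathbf{z}$ that is a $\pm i$--eigenvector of some $A_j$, since then $A_j(\mathbf{z})+\beta_j(\mathbf{z})\mathbf{z}=0$.

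The correct logic is the reverse of what you wrote: the $TS^m$-- and $TS^1$--components, far from ``vanishing automatically'', are what force $\gamma=0$. They equal $i\gamma(e_1-\la v,e_1\ra_\R v)$ and $\gamma\la v,e_1\ra_\R\lambda$; combining the two (and using $\lambda\neq 0$) gives $i\gamma e_1=0$, hence $\gamma=0$, and only then does the $T\C P^n$--component reduce to $\sum_j c_jA_j(\mathbf{z})=0$, where your orthogonality argument correctly finishes. Likewise (iii) is not formal: since the $T\C P^n$--component does not separate the first block of fields, you must compare the $TS^m$-- and $TS^1$--components of $\sum_j x_j\xi_j$ and $\sum_k y_k\xi_{2\nu+k}$ directly. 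As in the paper, eliminating the term proportional to $v$ and invoking the linear independence of $e_1,\dots,e_{m+1}$ forces all $y_k=0$, after which part (i) applies.
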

\begin{proof}
\noindent{(i)}~~Given $([z],v,\lambda)\in \C P^n\times S^m\times S^1$, suppose there are $x_1,\ldots ,x_{2\nu+1}\in \R$ such that $\sum_{j=1}^{2\nu+1} x_j \xi_j([z],v,\lambda)=0$. We obtain equations
\begin{equation}\label{eq:z}
\sum_{j=1}^{2\nu+1} x_j(A_j(\mathbf{z}) + \beta_j(\mathbf{z})\mathbf{z}) = 0,
\end{equation}
\begin{equation}\label{eq:v}
\sum_{j=1}^{2\nu+1} ix_j\beta_j(\mathbf{z})(e_1-\la v,e_1\ra_\R v) = 0,
\end{equation}
\begin{equation}\label{eq:lambda}
\sum_{j=1}^{2\nu+1} x_j\beta_j(\mathbf{z})\la v,e_1\ra_\R \lambda = 0.
\end{equation}

As $\lambda$ is a nonzero complex number, Equation (\ref{eq:lambda}) implies $\sum x_j \beta_j(\mathbf{z})\la v,e_1\ra_\R = 0$. It follows from Equation (\ref{eq:v}) that $\sum i x_j\beta_j(\mathbf{z})e_1=0$, and hence that $\sum x_j\beta_j(\mathbf{z})=0$. In turn, from Equation (\ref{eq:z}) we obtain $\sum x_jA_j(\mathbf{z})=0$. 

We now demonstrate that the vectors $A_j(\mathbf{z})\in \C^{n+1}\cong\R^{2n+2}$ are mutually orthogonal, hence linearly independent, over $\R$. It suffices to show that $\la A_j(\mathbf{z}),A_k(\mathbf{z})\ra$ is purely imaginary for $j\neq k$, and this follows from properties (\ref{eq:anticomm}) and (\ref{eq:skewhermitian}) of Lemma \ref{lem:Aj} together with conjugate symmetry:
\begin{equation*}
\begin{aligned}
\la A_j(\mathbf{z}),A_k(\mathbf{z})\ra & = -\la \mathbf{z}, A_jA_k(\mathbf{z}) \ra \\
                                                          & = \la \mathbf{z}, A_kA_j(\mathbf{z}) \ra \\
                                                          & = -\la A_k(\mathbf{z}),A_j(\mathbf{z}) \ra \\
                                                          & = - \overline{\la A_j(\mathbf{z}),A_k(\mathbf{z}) \ra}.
\end{aligned} 
\end{equation*}

Finally, the equation $\sum x_jA_j(\mathbf{z})=0$ implies $x_1=\cdots = x_{2\nu+1}=0$ as required.

\noindent{(ii)}~~Given $([z],v,\lambda)\in \C P^n\times S^m\times S^1$, suppose there are $x_2,\ldots ,x_{m+1}\in \R$ such that $\sum_{j=2}^{m+1} x_j \xi_{2\nu+j}([z],v,\lambda)=0$. We obtain equations
\[
\sum_{j=2}^{m+1} x_j(e_j-\la v,e_j\ra_\R v) =0,\qquad \sum_{j=2}^{m+1} -i x_j \la v,e_j\ra_\R \lambda =0,
\]
which quickly imply, together with the linear independence of $\{e_2,\ldots, e_{m+1}\}$, that $x_2=\cdots = x_{m+1}=0$.

\noindent{(iii)}~~Given $([z],v,\lambda)\in \C P^n\times S^m\times S^1$, suppose there are real scalars $x_1,\ldots , x_{2\nu+1}, \\ y_2,\ldots , y_{m+1}\in\R$ such that $\sum_{j=1}^{2\nu+1} x_j \xi_j([z],v,\lambda) = \sum_{k=2}^{m+1} y_k\xi_{2\nu+k}([z],v,\lambda)$. From the $TS^1$ component we get
\[
\sum_{j=1}^{2\nu+1} x_j\beta_j(\mathbf{z})\la v,e_1\ra_\R \lambda = \sum_{k=2}^{m+1} -i y_k \la v,e_j\ra_\R \lambda
\]
from which we deduce that 
\begin{equation}\label{eq:TS1}
\sum_{j=1}^{2\nu+1}i x_j \beta_j(\mathbf{z})\la v,e_1\ra_\R  = \sum_{k=2}^{m+1}  y_k \la v,e_j\ra_\R. 
\end{equation}
Now from the $TS^m$ component we have
\[
\sum_{j=1}^{2\nu+1} i x_j \beta_j(\mathbf{z}) (e_1 - \la v,e_1\ra_\R v) = \sum_{k=2}^{m+1} y_k(e_k-\la v, e_k\ra_\R v),
\]
which together with Equation (\ref{eq:TS1}) implies that 
\[
\sum_{j=1}^{2\nu+1} i x_j \beta_j(\mathbf{z}) e_1  = \sum_{k=2}^{m+1} y_k e_k.
\]
Since $\{e_1,\ldots , e_{m+1}\}$ is linearly independent, $y_2=\cdots = y_{m+1}=0$.
\end{proof}

\section{Proof of Lemma \ref{lem:Aj}}\label{sec:Aj}
In this section, we give the proof of Lemma \ref{lem:Aj} by means of Clifford modules. We refer to the book of Friedrich \cite{Friedrich} for useful background on Clifford algebras and their representations, and we adopt the notation used there. 

Let \((V, \langle \cdot, \cdot \rangle_{V})\) and \((W, \langle \cdot, \cdot \rangle_{W})\) be inner product spaces. With this data one can form an inner product on the direct sum \(V \oplus W\) by setting
\[
\langle v_1 + w_1, v_2 + w_2\rangle_{V \oplus W} = \langle v_1,v_2 \rangle_{V} + \langle w_1, w_2 \rangle_{W}.
\]
Similarly, there is an inner product on \(V \otimes W\) defined by
\[
\langle v_1 \otimes w_1, v_2 \otimes w_2 \rangle_{V \otimes W} = \langle v_1, v_2\rangle_V \cdot \langle w_1, w_2 \rangle_W. 
\]
Furthermore, recall the following definition, c.f.\ \cite[Chapter 2]{Simon}.
\begin{defn}
Let \(V\) be a complex vector space. A \emph{real structure} is an \(\R\)--linear map \(J \colon V \to V\) with the properties
\begin{enumerate}[label=(\roman*)]
    \item \(J\) is \(\C\)--antilinear; that is \(J(\alpha x + \beta y) = \overline{\alpha}J(x) + \overline{\beta}J(y)\) for \(\alpha,\beta \in \C\), \(x,y \in V.\)
    \item \(J^2 = {\rm{id}}.\)
\end{enumerate}
Moreover, we say that a real structure $J$ on a complex inner product space \((V, \langle \cdot, \cdot \rangle)\) is \emph{orthogonal} if and only if \(\langle Jx, Jy \rangle = \overline{\langle x,y\rangle}\) for all \(x,y \in V\). 
\end{defn}

Now for fixed \(n \geq 0\), we seek \(2\nu(n+1) + 1\) automorphisms \(A_j \colon \C^{n+1} \to \C^{n+1}\) satisfying properties (\ref{eq:anticomm}), (\ref{eq:skewhermitian}) and (\ref{eq:commconj}) of Lemma \ref{lem:Aj}. For $n=0$ we may take $A_1(z)=iz$. Hence we assume $n\ge1$ in what follows.

Equip \(\C^{n+1}\) with the orthogonal real structure \(J \colon \C^{n+1} \to \C^{n+1}\) defined by 
\[
J \colon \begin{pmatrix} 
    z_1 \\
    \vdots \\
    z_{n+1}
\end{pmatrix} \mapsto 
\begin{pmatrix}
    \overline{z}_1 \\
    \vdots \\
    \overline{z}_{n+1}
\end{pmatrix}.
\]
Here \(\C^{n+1}\) is endowed with its standard Hermitian inner product \(\langle \mathbf{z}, \mathbf{w}\rangle = \mathbf{w}^*\mathbf{z}\), where \(\mathbf{w}^* = (J(\mathbf{w}))^T\) denotes the conjugate transpose of \(\mathbf{w}\) with respect to the orthogonal real structure \(J\).

Write \(n + 1 = 2^\nu b\) where \(\nu = \nu(n+1)\) and \(b\) is odd. This gives rise to an identification
\begin{equation}\label{eq:isometry}
\C^{n+1} \cong \underbrace{(\C^2 \otimes \cdots \otimes \C^2)}_{\nu} \oplus \cdots \oplus \underbrace{(\C^2 \otimes \cdots \otimes \C^2)}_{\nu},
\end{equation}
where there are \(b\) direct summands and the tensor products are taken over \(\C.\) The complex vector space \(\Delta_{2\nu + 1} \coloneqq \underbrace{\C^2 \otimes \cdots \otimes \C^2}_{\nu}\) is the \emph{complex spinor space}. Denote by \(b\Delta_{2\nu+1} \coloneqq \Delta_{2\nu + 1}^{\oplus b}\) the right-hand side of (\ref{eq:isometry}). Then $b\Delta_{2\nu+1}$ admits a Hermitian inner product built from the usual Hermitian inner product on \(\C^2\) by taking tensor products and direct sums, as discussed above. 

We can define a real structure on $b\Delta_{2\nu+1}$ as follows. Endow \(\C^{2}\) with its usual complex conjugation, which we also denote by
\[
J \colon \C^2 \ni \begin{pmatrix} 
    z_1 \\
    z_{2}
\end{pmatrix} \mapsto 
\begin{pmatrix}
    \overline{z}_1 \\
    \overline{z}_{2} 
\end{pmatrix} \in \C^2. 
\]
Then set \[bJ_{2\nu+1} = \underbrace{(J \otimes \cdots \otimes J)}_{\nu} \oplus \cdots \oplus \underbrace{(J \otimes \cdots \otimes J)}_{\nu},\] where there are \(b\) direct summands. It is straightforward to check that \(bJ_{2\nu+1}\) is a well-defined orthogonal real structure on \(b\Delta_{2\nu+1}\), c.f.\ \cite[Section 1.7]{Friedrich}.

The following result follows from direct (if tedious) computation. 

\begin{lem}\label{lem:isometry}
There exists a unitary isometry \(T \colon b\Delta_{2\nu+1}\to \C^{n+1}\) which maps the real structure \(bJ_{2\nu+1}\) on \(b\Delta_{2\nu+1}\) to the real structure \(J\) on \(\C^{n+1}\). 
\end{lem}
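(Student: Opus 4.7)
The plan is to construct $T$ by exhibiting corresponding real orthonormal bases on the two sides and extending by $\C$-linearity. Both $b\Delta_{2\nu+1}$ and $\C^{n+1}$ are complex Hermitian inner product spaces of complex dimension $n+1 = 2^\nu b$, each equipped with an orthogonal real structure. The key structural fact is that for any Hermitian space $(V, \langle\cdot,\cdot\rangle)$ with an orthogonal real structure $K$, the fixed-point set $V^K$ is an $\R$-subspace of real dimension $\dim_\C V$ on which the Hermitian form restricts to a positive-definite real inner product, and $V = V^K \otimes_\R \C$. Any two such triples with the same complex dimension are therefore unitarily isomorphic via a map respecting the real structures, provided one matches real orthonormal bases of the fixed sets.

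First I would identify explicit $\R$-orthonormal bases of the two fixed subspaces. On the right, $(\C^{n+1})^J = \R^{n+1}$ has the standard basis $\{e_1, \ldots, e_{n+1}\}$. On the left, the tensor products of the standard real basis $\{(1,0),(0,1)\}$ of $\R^2 \subset \C^2$ furnish an $\R$-orthonormal basis of each fixed subspace $(\Delta_{2\nu+1})^{J^{\otimes\nu}}$, and concatenating across the $b$ direct summands yields an $\R$-orthonormal basis $\{f_1, \ldots, f_{n+1}\}$ of $(b\Delta_{2\nu+1})^{bJ_{2\nu+1}}$. I would then define $T$ to be the unique $\C$-linear map with $T(f_i) = e_i$ for each $i$.

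It remains to verify the three required properties. The map $T$ is a $\C$-linear bijection sending a $\C$-orthonormal basis to a $\C$-orthonormal basis, since an $\R$-orthonormal basis of the real fixed subspace of an orthogonal real structure is automatically a $\C$-orthonormal basis of the ambient Hermitian space; hence $T$ is a unitary isometry. Moreover each $T(f_i) = e_i$ is $J$-fixed, so $J \circ T(f_i) = e_i = T(f_i) = T \circ bJ_{2\nu+1}(f_i)$, and since both $T \circ bJ_{2\nu+1}$ and $J \circ T$ are $\C$-antilinear and agree on an $\R$-basis of $b\Delta_{2\nu+1}$, they agree everywhere.

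The main obstacle, such as it is, lies in the bookkeeping needed to confirm that tensor products and direct sums of orthogonal real structures on Hermitian spaces are again orthogonal real structures whose fixed sets decompose compatibly with the given bases. Concretely one must check the identities $(V_1 \otimes V_2)^{K_1 \otimes K_2} = V_1^{K_1} \otimes_\R V_2^{K_2}$ and $(V_1 \oplus V_2)^{K_1 \oplus K_2} = V_1^{K_1} \oplus V_2^{K_2}$ as real inner product spaces; these are routine but tedious, and it is presumably this verification that the authors have in mind when they describe the proof as a direct computation.
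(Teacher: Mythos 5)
Your proof is correct and is essentially the ``direct (if tedious) computation'' that the paper alludes to without writing out: identify the real fixed subspaces of the orthogonal real structures on both sides, match $\R$-orthonormal bases of these real forms (elementary tensors of standard basis vectors on the spinor side, the standard basis $e_1,\dots,e_{n+1}$ on the other), and extend $\C$-linearly. The supporting facts you invoke --- that the fixed set of an orthogonal real structure is a real form of half the real dimension on which the Hermitian form is a real inner product, that an $\R$-orthonormal basis of this real form is a $\C$-orthonormal basis of the ambient space, and that fixed sets are compatible with tensor products and direct sums (by inclusion plus a dimension count) --- are all accurately stated and suffice to close the argument.
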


As a consequence of Lemma \ref{lem:isometry}, in order to find automorphisms \(A_j \colon \C^{n+1}\to \C^{n+1}\) for \(j = 1, \dots, 2\nu+1,\) satisfying the properties listed in Lemma \ref{lem:Aj}, it suffices to construct automorphisms \(A_j \colon b\Delta_{2\nu+1} \to b\Delta_{2\nu+1}\) for \(j = 1, \dots, 2\nu+1,\) satisfying the analogous properties with respect to the orthogonal real structure on $ b\Delta_{2\nu+1}$. This is the statement of Lemma \ref{lem:Ajprelim} below. 

We transport the problem in this way because the complex spinor space \(\Delta_{2\nu+1}\) carries useful structure as a module of a complex Clifford algebra.
We write  \(\mathcal{C}_n\) for the standard real Clifford algebra associated to the negative definite quadratic form $q(\mathbf{x})=-x_1^2-\ldots - x_n^2$ on $\R^n$, and \(\mathcal{C}_n^c\cong \mathcal{C}_n\otimes_\R \C\) for the standard complex Clifford algebra associated to the negative definite quadratic form \(q(\mathbf{z}) = -z_1^2 - \dots -z_n^2\) on $\C^n$. According to \cite[Section 1.3]{Friedrich}, we have the following identification.
\begin{equation*}
\begin{aligned}
\mathcal{C}^c_{2\nu+1} = \mathcal{C}_{2\nu + 1} \otimes_\R \C &\overset{\Phi}\cong \underbrace{M_2(\C) \otimes \cdots \otimes M_2(\C)}_\nu \oplus \underbrace{M_2(\C) \otimes \cdots \otimes M_2(\C)}_{\nu}\\
&\cong {\rm{End}}(\Delta_{2\nu+1}) \oplus  {\rm{End}}(\Delta_{2\nu+1})
\end{aligned}
\end{equation*}
Denote by \(e_i\) for \(i = 1,\dots, 2\nu + 1\), the generators of the algebra \(\mathcal{C}^c_{2\nu+1}\). Via the inclusions \(\R^{2\nu +1} \subset \mathcal{C}_{2\nu+1} \subset \mathcal{C}^c_{2\nu+1}\), we can regard each \(e_i\) as the \(i\)--th basis vector of \(\R^{2\nu + 1}\). Next let
\[
E= 
\begin{pmatrix}
1 & 0 \\
0 & 1
\end{pmatrix}, ~~
g_1 = 
\begin{pmatrix}
i & 0 \\
0 & -i
\end{pmatrix}, ~~
g_2 = 
\begin{pmatrix}
0 & i \\
i & 0
\end{pmatrix}, ~~
T = 
\begin{pmatrix}
0 & -i \\
i & 0
\end{pmatrix}
\]
be the generating elements of the algebra \(M_2(\C)\). Then the isomorphism \(\Phi\) is given as follows. If \(1 \leq j \leq 2\nu\), the generator \(e_j\) is mapped to 
\[
e_j \mapsto \left(E \otimes \cdots \otimes E \otimes g_{\alpha(j)} \otimes \underbrace{T \otimes \cdots \otimes T}_{\lfloor \frac{j-1}{2} \rfloor},\; E \otimes \cdots \otimes E \otimes g_{\alpha(j)} \otimes \underbrace{T \otimes \cdots \otimes T}_{\lfloor \frac{j-1}{2} \rfloor}\right),
\]
where \(\alpha(j) = 1\) if \(j\) is odd and \(\alpha(j) = 2\) if \(j\) is even. On the other hand, if \(j = 2\nu + 1\), the generator \(e_{2\nu+1}\) is mapped to 
\[
e_{2\nu+1} \mapsto (iT \otimes \cdots \otimes T, -iT \otimes \cdots \otimes T).
\]
Now, the complex spinor space \(\Delta_{2\nu +1}\) becomes a module over the complex Clifford algebra \(\mathcal{C}^c_{2\nu+1}\) via the \emph{spin representation}
\[
\kappa \colon \mathcal{C}^c_{2\nu+1} \overset{\Phi}\cong {\rm{End}}(\Delta_{2\nu+1}) \oplus  {\rm{End}}(\Delta_{2\nu+1}) \overset{{\rm{proj}}_1}\longrightarrow {\rm{End}}(\Delta_{2\nu+1}),
\]
whence a vector \(x \in \R^{2\nu+1} \subset \mathcal{C}_{2\nu+1} \subset \mathcal{C}_{2\nu+1}^c \overset{\kappa}\to {\rm{End}}(\Delta_{2\nu+1})\) can be viewed as an endomorphism of \(\Delta_{2\nu+1}\). This leads to the so-called \emph{Clifford multiplication of vectors and spinors}, which is a linear map 
\begin{equation*}
\begin{aligned}
\mu \colon \R^{2\nu+1} \otimes_\R \Delta_{2\nu+1} &\to \Delta_{2\nu+1}\\
(x,\varphi) & \mapsto \kappa(x)(\phi) \coloneqq x \cdot \varphi.
\end{aligned}
\end{equation*}

It remains to prove the following lemma, which, in combination with Lemma \ref{lem:isometry}, proves Lemma \ref{lem:Aj}. 

\begin{lem}\label{lem:Ajprelim}
For \(j = 1, \dots, 2\nu+1\), define $A_j:  b\Delta_{2\nu+1}\to  b\Delta_{2\nu+1}$ by
\[
A_j = \underbrace{\kappa(e_j) \oplus \cdots \oplus\kappa(e_j)}_{b}.
\]
    \begin{enumerate}[label=\emph{(\roman*)}]
    \item The $A_j$ anti-commute: for all $j\neq k$ we have $A_j\circ A_k = -A_k\circ A_j$.
        \item The \(A_j\) are skew--Hermitian. That is, for any \(\varphi, \psi \in b\Delta_{2\nu+1}\), the \(A_j\) satisfy \(\langle \varphi, A_j(\psi) \rangle + \langle A_j(\varphi), \psi\rangle = 0\).
        \item The complex conjugate \(bJ_{2\nu+1}\) on \(b\Delta_{2\nu+1}\) commutes with $A_j$ up to sign. 
    \end{enumerate}
\end{lem}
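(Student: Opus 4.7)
The plan is to verify each of the three properties on a single summand $\Delta_{2\nu+1}$; the diagonal definitions of $A_j$ and $bJ_{2\nu+1}$ then extend everything block-wise to $b\Delta_{2\nu+1}$ formally. Part (i) is immediate: the defining Clifford relations in $\mathcal{C}^c_{2\nu+1}$ give $e_je_k = -e_ke_j$ for $j\neq k$, and this identity is preserved by the algebra homomorphism $\kappa$ and then by the block-diagonal extension.

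For part (ii), I would argue abstractly rather than by an explicit tensor computation. Since $e_j^2 = -1$ in $\mathcal{C}^c_{2\nu+1}$, we have $\kappa(e_j)^2 = -\mathrm{Id}$, so it suffices to show that $\kappa(e_j)$ is unitary, for then $\kappa(e_j)^* = \kappa(e_j)^{-1} = -\kappa(e_j)$, which is exactly the skew-Hermitian condition. Unitarity can be read off the explicit tensor formulas: each of $E, g_1, g_2, T$ is unitary, hence so are their tensor products, and the extra factor of $i$ appearing for $j = 2\nu+1$ has unit modulus. Alternatively, one can note directly that $E$ and $T$ are Hermitian while $g_1$ and $g_2$ are skew-Hermitian, so each $\kappa(e_j)$ is a tensor product containing exactly one skew-Hermitian factor (or an imaginary scalar times a Hermitian operator, for $j=2\nu+1$), and hence is skew-Hermitian.

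Part (iii) is the main content. The key identity is the tensor-product rule
\[
(B_1 \otimes \cdots \otimes B_\nu) \circ (J \otimes \cdots \otimes J) = (J \otimes \cdots \otimes J) \circ (\overline{B}_1 \otimes \cdots \otimes \overline{B}_\nu),
\]
where $\overline{B}$ denotes entry-wise complex conjugation. Direct inspection yields $\overline{E} = E$, $\overline{g_1} = -g_1$, $\overline{g_2} = -g_2$, $\overline{T} = -T$, and $\overline{iT} = iT$. Substituting into the definition of $\kappa(e_j)$ from the excerpt and counting the parity of sign-reversing tensor slots (one from $g_{\alpha(j)}$ plus $\lfloor (j-1)/2\rfloor$ from the trailing $T$'s when $j\le 2\nu$, and $\nu+1$ in total for $j=2\nu+1$), one obtains $\overline{\kappa(e_j)} = \varepsilon_j \kappa(e_j)$ for an explicit $\varepsilon_j \in\{\pm1\}$. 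The tensor-product identity then gives $\kappa(e_j) \circ J^{\otimes\nu} = \varepsilon_j \, J^{\otimes\nu} \circ \kappa(e_j)$, which passes diagonally to $bJ_{2\nu+1}$. The only genuine obstacle is this sign bookkeeping in (iii); the remainder reduces to standard facts about Clifford representations together with the explicit matrix formulas already set up in the excerpt.
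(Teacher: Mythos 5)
Your proof is correct and follows essentially the same route as the paper: reduce to a single tensor block $\Delta_{2\nu+1}$, use that adjoints and entrywise conjugation distribute over tensor products and direct sums, and count the sign-reversing factors ($g_{\alpha(j)}$, the trailing $T$'s, and the scalar $i$ in $X(2\nu+1)$). The only cosmetic difference is your unitarity shortcut for (ii) (via $\kappa(e_j)^2=-\mathrm{Id}$), where the paper instead observes directly that exactly one tensor factor of each $X(j)$ is skew-Hermitian --- an argument you also supply as your alternative.
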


\begin{proof}[Proof of \emph{(i)}]
The generators $e_j$ of $\mathcal{C}^c_{2\nu+1}$ anti-commute, and $\kappa: \mathcal{C}^c_{2\nu+1}\to \mathrm{End}(\Delta_{2\nu+1})$ is an algebra homomorphism.
\end{proof}

\begin{proof}[Proof of \emph{(ii)}]
Recall that for matrices \(X\) and \(Y\) the conjugate transpose distributes over the direct sum \((X\oplus Y)^* = X^* \oplus Y^*\) as well as over the tensor product \((X\otimes Y)^* = X^* \otimes Y^*\)---see e.g.\ \cite[Exercise 2.28]{CN}. Thus, for each \(j = 1, \dots, 2\nu+1\), the operator \(A_j\) satisfies
\[
\langle \varphi, A_j(\psi) \rangle + \langle A_j(\varphi), \psi\rangle = 0,
\]
for any \(\varphi, \psi \in b\Delta_{2\nu+1}\) if and only if 
\[
A_j = -A_j^*,
\]
where we take the conjugate transpose of \(A_j\) with respect to the real structure \(bJ_{2\nu+1}\) on \(b\Delta_{2\nu+1}\). 
It suffices to check that the matrices
\begin{equation*}
\begin{aligned}
X(j) = \begin{cases}
E \otimes \cdots \otimes E \otimes g_{\alpha(j)} \otimes \underbrace{T \otimes \cdots \otimes T}_{\lfloor \frac{j-1}{2} \rfloor}, &(j = 1, \dots, 2\nu), \\
iT \otimes \cdots \otimes T, &(j = 2\nu + 1)
\end{cases}
\end{aligned}
\end{equation*}
are skew-Hermitian with respect to the \(J^{\otimes \nu}\) conjugation. For if \(X(j) = -X(j)^*\) then 
\[
A_j^* = \underbrace{X(j)^* \oplus \cdots \oplus X(j)^*}_b = \underbrace{-X(j) \oplus \cdots \oplus -X(j)}_b = -A_j
\]
for any \(j = 1, \dots, 2\nu +1\).

But \(E,T\) are Hermitian with respect to the complex conjugation \(J\), that is \(E = E^*\) and \(T = T^*\), while \(g_1,g_2\) are skew-Hermitian, that is \(g_i = -g_i^*\) for \(i = 1,2\). So indeed \(X(j) = -X(j)^*\) for any \(j = 1, \dots, 2\nu + 1\), as required. 
\end{proof}

\begin{proof}[Proof of \emph{(iii)}]
First note the following easy computations
\[
JE = EJ, ~~Jg_1 = -g_1J, ~~ Jg_2 = -g_2J, ~~ JT = -TJ,
\]
from which we immediately obtain the following. For \(j = 1, \dots, 2\nu\):
    \begin{equation*}
    J^{\otimes \nu}X(j)  =  
    \begin{cases}-X(j)J^{\otimes \nu}, &\text{ for } \lfloor\frac{j-1}{2}\rfloor \text{ even},\\
     X(j) J^{\otimes \nu}, &\text{ for } \lfloor\frac{j-1}{2}\rfloor \text{ odd.}
    \end{cases}
    \end{equation*}
While for \(j = 2\nu + 1\): 
    \begin{equation*}
        J^{\otimes \nu}X(j)= 
        \begin{cases}
         -X(j)J^{\otimes \nu} &\text{ for } \nu \text{ even,} \\
         X(j)J^{\otimes \nu}, &\text{ for } \nu \text{ odd.}
        \end{cases}
    \end{equation*}
Thus for any \(\varphi \in b\Delta_{2\nu+1}\) and for any \(j=1, \dots, 2\nu+1,\) 
\[
A_j(bJ_{2\nu+1}(\varphi)) = X(j)^{\oplus b}bJ_{2\nu+1}(\varphi)= \pm bJ_{2\nu +1} X(j)^{\oplus b} = \pm bJ_{2\nu+1}A_j(\varphi),
\]
as claimed.
\end{proof}

\begin{rem}
Given a complex representation $G\to \operatorname{Aut}_\C(V)$, recall that a \emph{real structure} on $V$ is an $\R$-linear and $\C$-antilinear map $J:V\to V$ such that $J^2=\operatorname{id}$, and such that $J(gv)=g J(v)$ for all $g\in G$ and $v\in V$. As noted for example in \cite[Section 1.7]{Friedrich}, the complex spinor representation $\Delta_{2\nu+1}$ does not always admit a real structure. However, what we have observed and made use of here is that it always admits a \emph{quasi-real} structure, namely a $J$ as above satisfying that for all $g\in G$ there exists a sign $\varepsilon_g\in \{\pm 1\}$ such that $J(g v) = \varepsilon_g g J(v)$ for all $v\in V$. Such quasi-real structures may lead to further computations of projective span, or be of independent interest. 
\end{rem}

\end{document}